\newtheorem{theorem}{Theorem}
\newtheorem{proposition}[theorem]{Proposition}
\newtheorem{corollary}[theorem]{Corollary}
\newtheorem{lemma}[theorem]{Lemma}
\newtheorem{definition}[theorem]{Definition}
\newtheorem{remark}[theorem]{Remark}
\newtheorem{hypothesis}[theorem]{Hypothesis}
\chardef\bslash=`\\ 
\newcommand{\wh}{\widehat}
\newcommand{\whA}{T}
\newcommand{\whB}{T_{\cB}}
\newcommand{\whBo}{T_{\cB_0}}
\newcommand{\dA}{{\dot A}}
\newcommand{\bbR}{{\mathbb{R}}}
\newcommand{\bbD}{{\mathbb{D}}}
\newcommand{\bbC}{{\mathbb{C}}}
\newcommand{\ran}{\text{\rm{Ran}}}
\newcommand{\ol}{\overline}
\newcommand{\ti}{\tilde  }
\newcommand{\dom}{\text{\rm{Dom}}}
\newcommand{\Dom}{\text{\rm{Dom}}}
\newcommand{\calH}{{\mathcal H}}
\newcommand{\calK}{{\mathcal K}}
\newcommand{\calM}{{\mathcal M}}
\newcommand{\calR}{{\mathcal R}}
\newcommand{\linspan}{\mathrm{lin\ span}}
\newcommand{\mM}{\mathfrak M}
\newcommand{\DdA}{\dom(\dA)}
\renewcommand{\Im}{\text{\rm Im}}
\def\sJ{{\mathfrak J}}      \def\sL{{\mathfrak L}}
\def\sM{{\mathfrak M}}   \def\sN{{\mathfrak N}}
\def\bA{{\mathbb A}}   \def\dB{{\mathbb B}}   \def\dC{{\mathbb C}}
      \def\dR{{\mathbb R}}
   \def\cB{{\mathcal B}}   
   \def\cH{{\mathcal H}}
\def\RE{{\rm Re\,}}
\def\Ker{{\rm Ker\,}}
\def\wh{\hat}
\def\uphar{{\upharpoonright\,}}
\DeclareMathOperator{\IM}{Im}
\newcommand{\eval}[2][\right]{\relax
  \ifx#1\right\relax \left.\fi#2#1\rvert}
\begin{document}

\title{Conservative  L-systems and the Liv\v{s}ic function}

\author{S. Belyi}
\address{Department of Mathematics\\ Troy State University\\
Troy, AL 36082, USA\\
}
\curraddr{}
\email{sbelyi@troy.edu}

\author{K. A. Makarov}
\address{Department of Mathematics, University of Missouri, Columbia, MO 65211, USA}
\email{makarovk@missouri.edu}

\author{E. Tsekanovski\u i}
\address{Department of Mathematics\\ Niagara University, NY 14109\\ USA}
\email{tsekanov@niagara.edu}

\subjclass[2010]{Primary: 81Q10, Secondary: 35P20, 47N50}

\dedicatory{Dedicated to Yury Berezansky, a remarkable Mathematician and Human Being, on the occasion of his $90^{th}$ birthday.}

\keywords{L-system, transfer function, impedance function,  Herglotz-Nevanlinna function, Weyl-Titchmarsh function, Liv\v{s}ic function, characteristic function,
Donoghue class, symmetric operator, dissipative extension, von Neumann parameter.}

\begin{abstract}
We study the connection between  {the classes of (i)} Liv\v{s}ic  functions $s(z),$ { i.e.,} the characteristic functions  of  densely defined symmetric operators $\dot A$ with deficiency indices $(1, 1)$;
{(ii)} the characteristic functions $S(z)$  of a maximal dissipative extension $T$ of $\dot A,$ { i.e.,  the M\"obius transform of $s(z)$ determined by the von Neumann parameter $\kappa$ of the extension relative to an appropriate basis in the deficiency subspaces};
and  {(iii)} the transfer functions $W_\Theta(z)$ of a conservative L-system $\Theta$ with the main operator $T$. It is shown that under a natural hypothesis  {the functions} $S(z)$ and $W_\Theta(z)$ are reciprocal to each other. In particular,  $W_\Theta(z)=\frac{1}{S(z)}=-\frac{1}{s(z)}$ whenever $\kappa=0$. It is established that the impedance function of a conservative L-system with the main operator $T$ belongs to the Donoghue class if and only if the von Neumann parameter  vanishes ($\kappa=0$). Moreover, we introduce the generalized Donoghue class and obtain the criteria for an impedance function to belong to this class. We also obtain the representation of a function from this class via the Weyl-Titchmarsh function. All results are illustrated by a number of examples.
 \end{abstract}

\maketitle

\section{Introduction}\label{s1}

Suppose that $T$ is a densely defined closed operator in a Hilbert space $\cH$ such that its resolvent set $\rho(T)$ is not empty  {and} assume, in addition, that
$\Dom(T)\cap \Dom(T^*)$ is dense.  {We also suppose  that} the restriction $\dA=T|_{\Dom(T)\cap \Dom(T^*)}$ is a closed symmetric operator with finite equal deficiency indices
 { and that}  $\calH_+\subset\calH\subset\calH_-$  {is}  the rigged Hilbert space associated with $\dot A$ (see Appendix \ref{A1} for a detailed discussion of a concept of  rigged Hilbert spaces).

One of the main objectives  of the current paper is the study of the \textit{L-system}
\begin{equation}
\label{col0}
 \Theta =
\left(%
\begin{array}{ccc}
  \bA    & K & J \\
   \calH_+\subset\calH\subset\calH_- &  & E \\
\end{array}%
\right),
\end{equation}
where the \textit{state-space operator} $\bA$ is a bounded linear operator from
$\calH_+$ into $\calH_-$ such that  $\dA \subset T\subset \bA$, $\dA \subset T^* \subset \bA^*$, $E$ is a finite-dimensional Hilbert space,
$K$ is a bounded linear operator from the space $E$ into $\calH_-$,   {and} $J=J^*=J^{-1}$ is a self-adjoint isometry  on $E$ such that the imaginary part of $\bA$ has a representation $\IM\bA=KJK^*$. Due to the facts that $\calH_\pm$ is dual to $\calH_\mp$ and  {that} $\bA^*$ is a bounded linear operator from $\calH_+$ into $\calH_-$, $\IM\bA=(\bA-\bA^*)/2i$ is a well defined bounded operator from $\calH_+$  into $\calH_-$.
Note that the main operator $T$ associated with the system $\Theta$ is uniquely determined by the state-space operator $\bA$ as its restriction on the domain $\dom(T)=\{f\in\calH_+\mid \bA f\in\calH\}$.

Recall that the operator-valued function  given by
\begin{equation*}\label{W1}
 W_\Theta(z)=I-2iK^*(\bA-zI)^{-1}KJ,\quad z\in \rho(T),
\end{equation*}
 is called the \textit{transfer function}  of the L-system $\Theta$ and
\begin{equation*}\label{real2}
 V_\Theta(z)=i[W_\Theta(z)+I]^{-1}[W_\Theta(z)-I] =K^*(\RE\bA-zI)^{-1}K,\quad z\in\rho(T)\cap\dC_{\pm},
\end{equation*}
is called the \textit{impedance function } of $\Theta$.

 {We remark that under the hypothesis   $\IM\bA=KJK^*$, the linear sets
$\ran(\mathbb  A-zI)$ and $\ran(\RE\mathbb  A-zI)$ contain $\ran (K)$ for   $z\in \rho(T)$
 and $z\in\rho(T)\cap\dC_{\pm}$, respectively, and therefore, both the transfer and impedance functions are well defined (see Section 2 for more details).}

  Note that if $\varphi_+=W_{\Theta}(z)\varphi_-$, where $\varphi_\pm\in E$, with $\varphi_-$ the input and $\varphi_+$ the output, then L-system \eqref{col0} can be associated with the
 system of equations
 \begin{equation}\label{Lyv}
\left\{   \begin{array}{l}
          (\bA-zI)x=KJ\varphi_-  \\
          \varphi_+=\varphi_- -2iK^* x
          \end{array} {.}
\right .
\end{equation}
  ({To recover $W_\Theta(z)\varphi_-$ from \eqref{Lyv} for a given $\varphi_-$, one needs to find $x$ and then determine $\varphi_+$.)}

 We remark that the concept of L-systems \eqref{col0}-\eqref{Lyv}  generalizes the one of the Liv\u sic systems in the case of a bounded main operator. It is also worth mentioning that
 those systems are   conservative in the sense  that a certain metric conservation law holds (for more details see \cite[Preface]{ABT}). An overview of the history of the subject and a detailed description  of the $L$-systems can be found in \cite{ABT}.

Another important object of interest  {in this context} is the \textit{Liv\v{s}ic function}. Recall that in \cite{L} M. Liv\v{s}ic introduced a fundamental concept of a characteristic function
of a densely defined symmetric operator $\dA$ with deficiency indices $(1, 1)$  as well as  of  its maximal non-self-adjoint  extension $T$.
 {Introducing an auxiliary self-adjoint (reference)  extension $A$  of $\dot A$,  in  \cite{MT-S}  two of the authors  (K.A.M. and E.T.) suggested to define   {a } characteristic function  of a symmetric operator {as well} of its dissipative extension  {as the one} associated with  the pairs $(\dot A, A)$  and  $(T, A)$, rather than with the single operators $\dot A$ and $T$, respectively. }
Following \cite{MT-S} and \cite{MT10} we call the characteristic function associated with the pair $(\dot A, A)$ the \textit{Liv\v{s}ic function}.
 For a detailed treatment  of the aforementioned  concepts of the   Liv\v{s}ic and the characteristic functions   we refer to \cite{MT-S} (see also \cite{AWT}, \cite{DM}, \cite{Koch}, \cite{St68}, \cite{TSh1}).

The main goal of the present paper is the following.

First, we establish a connection between  {the classes of}: (i) the Liv\v{s}ic functions $s(z)$, the characteristic functions  of a densely defined symmetric operators $\dot A$ with deficiency indices $(1, 1)$; (ii) the characteristic functions $S(z)$ of a maximal dissipative extension $T$ of $\dot A$,
 {the M\"obius transform of $s(z)$ determined by the von Neumann parameter $\kappa$}; and (iii) the transfer functions $W_\Theta(z)$ of an  L-system $\Theta$ with the main operator $T$. It is shown (see Theorem \ref{t-25}) that under some natural assumptions  {the functions} $S(z)$ and $W_\Theta(z)$ are reciprocal to each other. In particular, when $\kappa=0$, we have $W_\Theta(z)=\frac{1}{S(z)}=-\frac{1}{s(z)}$.

 Second, in Theorem \ref{t-21}, we show  that the impedance function of a conservative L-system with the main operator $T$ coincides with a function from  the Donoghue class $\sM$ if and only if the von Neumann parameter  vanishes that is $\kappa=0$.  For $0\le\kappa<1$ we introduce the generalized Donoghue class $\sM_\kappa$ and establish a {criterion}
  (see Theorem \ref{t-22}) for an impedance function to belong to $\sM_\kappa$.  In particular, when $\kappa=0$ the class $\sM_\kappa$ coincides with the Donoghue class $\sM=\sM_0$. Also, in Theorem \ref{t-14}, we  obtain the representation of a function from the class $\sM_\kappa$ via the Weyl-Titchmarsh function.

 We conclude  our paper  {by providing} several examples that illustrate  the main results and concepts.

\section{Preliminaries}\label{s2}

For a pair of Hilbert spaces $\calH_1$ and $\calH_2$ we denote by
$[\calH_1,\calH_2]$ the set of all bounded linear operators from
$\calH_1$ to $\calH_2$. Let $\dA$ be a closed, densely defined,
symmetric operator in a Hilbert space $\calH$ with inner product
$(f,g),f,g\in\calH$. Any operator $T$ in $\cH$ such that
\[
\dA\subset T\subset\dA^*
\]
is called a \textit{quasi-self-adjoint extension} of $\dA$.

 Consider the rigged Hilbert space (see \cite{Ber63}, \cite{Ber}, \cite{BT3})
$\calH_+\subset\calH\subset\calH_- ,$ where $\calH_+ =\dom(\dA^*)$ and
\begin{equation}\label{108}
(f,g)_+ =(f,g)+(\dA^* f, \dA^*g),\;\;f,g \in \dom(\dA^*).
\end{equation}
Let $\calR$ be the \textit{\textrm{Riesz-Berezansky   operator}} $\calR$ (see \cite{Ber63}, \cite{Ber}, \cite{BT3}) which maps $\mathcal H_-$ onto $\mathcal H_+$  {so}
 that   $(f,g)=(f,\calR g)_+$ ($\forall f\in\calH_+$, $\forall g\in\calH_-$) and
 $\|\calR g\|_+=\| g\|_-$.
 Note that
identifying the space conjugate to $\calH_\pm$ with $\calH_\mp$, we
get that if $\bA\in[\calH_+,\calH_-]$, then
$\bA^*\in[\calH_+,\calH_-].$

  {Next we proceed with several definitions.}

An operator $\bA\in[\calH_+,\calH_-]$ is called a \textit{self-adjoint
bi-extension} of a symmetric operator $\dA$ if $\bA=\bA^*$ and $\bA
\supset \dA$.

Let $\bA$ be a self-adjoint
bi-extension of $\dA$ and let the operator $\hat A$ in $\cH$ be defined as follows:
\[
\dom(\hat A)=\{f\in\cH_+:\bA f\in\cH\}, \quad \hat A=\bA\uphar\dom(\hat A).
\]
The operator $\hat A$ is called a \textit{quasi-kernel} of a self-adjoint bi-extension $\bA$ (see \cite{TSh1}, \cite[Section 2.1]{ABT}).

 A   self-adjoint bi-extension $\bA$ of a symmetric operator $\dA$ is called twice-self-adjoint or \textit{t-self-adjoint} (see \cite[Definition 3.3.5]{ABT}) if its quasi-kernel $\hat A$ is a
self-adjoint operator in $\calH$.  {In this case},
according to the von Neumann Theorem (see \cite[Theorem 1.3.1]{ABT}) the domain of $\wh A$,    {which is}
a self-adjoint extension of $\dA$,  can be represented as
\begin{equation}\label{DOMHAT}
\dom(\hat A)=\dom(\dA)\oplus(I+U)\sN_{i},
\end{equation}
where $U$ is both a $(\cdot)$-isometric as well as $(+)$)-isometric operator from $\sN_i$ into $\sN_{-i}$.
 Here $$\sN_{\pm i}=\Ker (\dA^*\mp i I)$$ are the deficiency subspaces of $\dA$.

An operator $\bA\in[\calH_+,\calH_-]$  is called a \textit{quasi-self-adjoint bi-extension} of an operator $T$ if
   $\dA\subset T\subset \bA$ and $\dA\subset T^*\subset \bA^*.$

In what follows
 we will be mostly interested in the following type of quasi-self-adjoint bi-extensions.
\begin{definition}[\cite{ABT}]\label{star_ext}
Let $T$ be a quasi-self-adjoint extension of $\dA$ with nonempty
resolvent set $\rho(T)$. A quasi-self-adjoint bi-extension $\bA$ of an operator $T$ is called a \textit{($*$)-extension }of $T$ if $\RE\bA$ is a
t-self-adjoint bi-extension of $\dA$.
\end{definition}
We assume that $\dA$ has equal finite deficiency indices and will say that a quasi-self-adjoint extension $T$ of $\dA$ belongs to the
\textit{class $\Lambda(\dA)$} if $\rho(T)\ne\emptyset$, $\dom(\dA)=\dom(T)\cap\dom(T^*)$, and hence  $T$ admits $(*)$-extensions. The description of
all $(*)$-extensions via Riesz-Berezansky   operator $\calR$ can be found in \cite[Section 4.3]{ABT}.

\begin{definition} 
A system of equations
\[
\left\{   \begin{array}{l}
          (\bA-zI)x=KJ\varphi_-  \\
          \varphi_+=\varphi_- -2iK^* x
          \end{array}
\right.,
\]
 or an
array
\begin{equation}\label{e6-3-2}
\Theta= \begin{pmatrix} \bA&K&\ J\cr \calH_+ \subset \calH \subset
\calH_-& &E\cr \end{pmatrix}
\end{equation}
 is called an \textbf{{L-system}}   if:
\begin{itemize}
\item[(1)] {$\mathbb  A$ is a   ($\ast $)-extension of an
operator $T$ of the class $\Lambda(\dA)$};
\item[(2)] {$J=J^\ast =J^{-1}\in [E,E],\quad \dim E < \infty $};
\item[(3)] $\IM\bA= KJK^*$, where $K\in [E,\calH_-]$, $K^*\in [\calH_+,E]$, and
$\ran(K)=\ran (\IM\bA).$
\end{itemize}
\end{definition}

{In what follows we assume the following terminology}. 
In the definition above   $\varphi_- \in E$ stands for an input vector, $\varphi_+ \in E$ is an output vector, and $x$ is a state space vector in
$\calH$.
The operator $\bA$  is called the \textit{state-space operator} of the system $\Theta$, $T$ is the \textit{main operator},  $J$ is the \textit{direction operator}, and $K$ is the  \textit{channel operator}. A system $\Theta$ \eqref{e6-3-2} is called \textit{minimal} if the operator $\dA$ is a prime operator in $\calH$, i.e., there exists no non-trivial
 subspace invariant for $\dot A$ such that the restriction of $\dot A$ to this subspace is self-adjoint.

We  associate with an L-system $\Theta$ the operator-valued function
\begin{equation}\label{e6-3-3}
W_\Theta (z)=I-2iK^\ast (\mathbb  A-zI)^{-1}KJ,\quad z\in \rho (T),
\end{equation}
which is called the \textbf{transfer  function} of the L-system $\Theta$. We also consider the operator-valued function
\begin{equation}\label{e6-3-5}
V_\Theta (z) = K^\ast (\RE\bA - zI)^{-1} K, \quad z\in\rho(\hat A).
\end{equation}

It was shown in \cite{BT3}, \cite[Section 6.3]{ABT} that both \eqref{e6-3-3} and \eqref{e6-3-5} are well defined. In particular,  $\ran(\mathbb  A-zI)$   does not depend on $z\in\rho(T)$ while $\ran(\RE\mathbb  A-zI)$ does not depend on $z\in\rho(\hat A)$. Also, $\ran(\mathbb  A-zI)\supset\ran(K)$ and $\ran(\RE\mathbb  A-zI)\supset\ran(K)$ (see  \cite[Theorem 4.3.2]{ABT}). The transfer operator-function $W_\Theta (z)$ of the system
$ \Theta $ and an operator-function $V_\Theta (z)$ of the form (\ref{e6-3-5}) are connected by the following relations valid for $\IM z\ne0$, $z\in\rho(T)$,
\begin{equation}\label{e6-3-6}
\begin{aligned}
V_\Theta (z) &= i [W_\Theta (z) + I]^{-1} [W_\Theta (z) - I] J,\\
W_\Theta(z)&=(I+iV_\Theta(z)J)^{-1}(I-iV_\Theta(z)J).
\end{aligned}
\end{equation}
The function $V_\Theta(z)$ defined by \eqref{e6-3-5} is called the
\textbf{impedance function} of the L-system $ \Theta $.
 The class of all Herglotz-Nevanlinna functions in a finite-dimensional Hilbert
space $E$, that can be realized as impedance functions of an L-system, was described in \cite{BT3} (see also \cite[Definition 6.4.1]{ABT}).

Two minimal L-systems
$$
\Theta_j= \begin{pmatrix} \bA_j&K_j&J\cr \calH_{+j} \subset\calH_j \subset \calH_{-j}& &E\cr
\end{pmatrix} \quad j=1,2.
$$
are called \textbf{bi-unitarily equivalent} \cite[Section 6.6]{ABT} if there exists a triplet of operators $(U_+, U, U_-)$ that isometrically maps the triplet $\calH_{+1}\subset\calH_1\subset\calH_{-1}$ onto the triplet $\calH_{+2}\subset\calH_2\subset\calH_{-2}$
such that $U_+=U|_{\calH_{+1}}$ is an isometry from $\calH_{+1}$ onto $\calH_{+2}$, $U_-=(U_+^*)^{-1}$ is an isometry from $\calH_{-1}$ onto $\calH_{-2}$, and
\begin{equation}\label{167}
UT_1=T_2U,  \quad U_-\bA_1=\bA_2 U_+,\quad U_-K_1=K_2.
\end{equation}
It is shown in \cite[Theorem 6.6.10]{ABT} that if the transfer functions $W_{\Theta_1}(z)$ and $W_{\Theta_2}(z)$ of the minimal systems $\Theta_1$ and $\Theta_2$ coincide for
$ z\in(\rho(T_1)\cap\rho(T_2))\cap \dC_{\pm}\ne\emptyset$, then $\Theta_1$ and $\Theta_2$ are bi-unitarily equivalent.


\section{On $(*)$-extension parametrization}\label{s3}

Let $\dA$ be a densely defined, closed, symmetric operator with finite deficiency indices $(n,n)$. Then  (see \cite[Section 2.3]{ABT})
$$
\calH_+=\dom(\dA^*)=\DdA\oplus\sN_i\oplus\sN_{-i},
$$
where $\oplus$ stands for the $(+)$-orthogonal sum. Moreover,
all operators from the class $\Lambda(\dA)$ are of the form (see \cite[Theorem 4.1.9]{ABT}, \cite{TSh1})
\begin{equation}\label{e4-60}
    \begin{aligned}
\dom(T)&=\dom(\dA)\oplus(\calK +I)\sN_{i},\quad T=\dA^*\uphar\dom(T),\\
\dom(T^*)&=\dom(\dA)\oplus(\calK^*+I)\sN_{-i},\quad
T^*=\dA^*\uphar\dom(T^*),
    \end{aligned}
\end{equation}
where $\calK\in[\sN_i,\sN_{-i}]$.

Let $\calM=\sN_i\oplus\sN_{-i}$ and $P^+_\sN$ be a $(+)$-orthogonal projection  onto a  subspace $\sN$.
 In this case (see \cite{TSh1}) all quasi-self-adjoint bi-extensions of $T\in\Lambda(\dA)$ can be parameterized via  an  operator $H\in[\sN_{-i},\sN_i]$ as follows
\begin{equation}\label{e4-61}
\bA=\dA^*+\calR^{-1} (S-\cfrac{i}{2}\sJ)P_\calM^+,\quad
\bA^*=\dA^*+\calR^{-1} (S^*-\cfrac{i}{2}\sJ)P_\calM^+,
\end{equation}
where $\sJ=P^+_{\sN_i}-P^+_{\sN_{-i}}$ and $S:\sN_i\oplus\sN_{-i}\rightarrow\sN_i\oplus\sN_{-i}$, satisfies the condition
\begin{equation}\label{opH}
S=\begin{pmatrix}\frac{i}{2}I-H\calK&H\cr
-(iI-\calK H)\calK&\frac{i}{2}I-\calK H
\end{pmatrix}.
\end{equation}
 Introduce $(2n\times2n)$-- block-operator matrices $S_{\bA}$ and $S_{\bA^*}$ by
\begin{equation}\label{e4-62}
\begin{aligned}
    S_\bA&=S-\frac{i}{2}\sJ=\left(
            \begin{array}{cc}
              -H\calK & H \\
              \calK(H\calK-iI) & iI-\calK H \\
            \end{array}
          \right),\\
S_{\bA^*}&=S^*-\frac{i}{2}\sJ=\left(
            \begin{array}{cc}
              -\calK^*H^*-iI & (\calK^*H^*-iI)\calK^* \\
              H^* & -H^*\calK^* \\
            \end{array}
          \right).
\end{aligned}
\end{equation}
By direct calculations one finds that
\begin{equation}\label{e-12-real}
    \frac{1}{2}(S_{\bA}+S_{\bA^*})=\frac{1}{2}\left(
            \begin{array}{cc}
              -H\calK-\calK^*H^*-iI & H+(\calK^*H^*+iI)\calK^* \\
              \calK(H\calK-iI)+H^* & iI-\calK H-H^*\calK^* \\
            \end{array}
          \right),
\end{equation}
and  {that}
\begin{equation}\label{e10}
    \frac{1}{2i}(S_{\bA}-S_{\bA^*})=\frac{1}{2i}\left(
            \begin{array}{cc}
              -H\calK+\calK^*H^*+iI & H-(\calK^*H^*+iI)\calK^* \\
              \calK(H\calK-iI)-H^* & iI-\calK H+H^*\calK^* \\
            \end{array}
          \right).
\end{equation}
In the case when the deficiency indices of $\dA$ are $(1,1)$, the  block-operator matrices $S_{\bA}$ and $S_{\bA^*}$ in \eqref{e4-62} become {$(2\times2)-$matrices}
 with scalar entries.
As it was announced in \cite{T69}, (see also \cite[Section 3.4]{ABT} and \cite{TSh1}), in this case any quasi-self-adjoint bi-extension $\bA$ of $T$ is of the form
\begin{equation}\label{e3-40}
    \bA=\dA^*+\left[p(\cdot,\varphi)+q(\cdot,\psi) \right]\varphi
    +\left[v(\cdot,\varphi)+w(\cdot,\psi) \right]\psi,
\end{equation}
where $S_{\bA}=\left(\begin{array}{cc}
                                        p & q \\
                                        v & w \\
                                      \end{array}
                                    \right)$
is a $(2\times2)$ -- matrix with scalar entries such that $p=-H\calK$, $q=H$, $v=\calK(H\calK-i)$, and $w=i-\calK H$. Also, $\varphi$ and $\psi$ are $(-)$-normalized elements in $\calR^{-1}(\sN_i)$ and  $\calR^{-1}(\sN_{-i})$, respectively. Both the parameters $H$ and $\calK$ are complex numbers in this case and $|\calK|<1$. Similarly we write
\begin{equation}\label{e-21-star}
    \bA^*=\dA^*+\left[p^\times(\cdot,\varphi)+q^\times(\cdot,\psi) \right]\varphi
    +\left[v^\times(\cdot,\varphi)+w^\times(\cdot,\psi) \right]\psi,
\end{equation}
where $S_{\bA^*}=\left(\begin{array}{cc}
                                        p^\times & q^\times \\
                                        v^\times & w^\times \\
                                      \end{array}
                                    \right)$
is  such that $p^\times=-\bar\calK\bar H-i$, $q^\times=(\bar\calK\bar H-i)\bar\calK$, $v^\times=\bar H$, and $w^\times=-\bar H\bar\calK$.
A direct check confirms that $\dA\subset T\subset\bA$ and we make the corresponding calculations below for the reader's convenience.

Indeed,  recall that $\|\varphi\|_-=\|\psi\|_-=1$. Using formulas \eqref{e3-4} and \eqref{e3-5} from Appendix A we get
$$
1=(\varphi,\varphi)_-=(\calR \varphi,\calR\varphi)_+=\|\calR \varphi\|_+^2=2\|\calR \varphi\|^2=(\sqrt2\calR\varphi,\sqrt2\calR\varphi).
$$
Set $g_+=\sqrt2\calR\varphi\in\sN_i$ and $g_-=\sqrt2\calR\psi\in\sN_{-i}$ and note that $g_+$ and $g_-$ form normalized vectors in $\sN_i$ and $\sN_{-i}$, respectively.
Now let $f\in\dom(T)$, where $\dom(T)$ is defined in \eqref{e4-60}. Then,
\begin{equation}\label{e-22-dec}
f=f_0+(\calK+1)f_1=f_0+Cg_++\calK Cg_-, \quad f_0\in\DdA,\; f_1\in \sN_i,
\end{equation}
for some choice of the constant $C$ that is specific to $f\in\dom(T)$.
Moreover,
$$
\bA f=T f+\left[p(f,\varphi)+q(f,\psi) \right]\varphi +\left[v(f,\varphi)+w(f,\psi) \right]\psi,\quad f\in\dom(T).
$$
Let us show that the last two terms in the sum above vanish.  Consider $(f,\varphi)$ where $f$ is decomposed into the $(+)$-orthogonal sum \eqref{e-22-dec}.
Using $(+)$-orthogonality of $\sN_i$ and $\sN_{-i}$ we have
$$
\begin{aligned}
(f,\varphi)&=(f_0+Cg_++\calK Cg_-,\varphi)=(f_0,\varphi)+(C g_+,\varphi)+(\calK C g_-,\varphi)\\
&=0+(C g_+,\calR\varphi)_++(\calK C g_-,\calR\varphi)_+\\
&=(C g_+,(1/\sqrt2)g_+)_++(\calK C g_-,(1/\sqrt2)g_+)_+\\
&=\frac{C}{\sqrt2}(g_+,g_+)_+=\frac{C}{\sqrt2}\|g_+\|_+^2=\sqrt2 C\|g_+\|^2=\sqrt2 C.
\end{aligned}
$$
Similarly,
$$
\begin{aligned}
(f,\psi)&=(f_0+Cg_++\calK Cg_-,\psi)=(f_0,\psi)+(C g_+,\psi)+(\calK C g_-,\psi)\\
&=0+(C g_+,\calR\psi)_++(\calK C g_-,\calR\psi)_+\\
&=(C g_+,(1/\sqrt2)g_-)_++(\calK C g_-,(1/\sqrt2)g_-)_+\\
&=\frac{\calK C}{\sqrt2}(g_-,g_-)_+=\frac{\calK C}{\sqrt2}\|g_-\|_+^2=\sqrt2 \calK C\|g_-\|^2=\sqrt2 \calK C.
\end{aligned}
$$
Consequently,
$$
p(f,\varphi)+q(f,\psi)=-H\calK(f,\varphi)+H(f,\psi)=H[-\calK\sqrt2 C+\sqrt2 \calK C]=0.
$$
Applying similar argument for the last bracketed term in \eqref{e3-40} we show that
$$
v(f,\varphi)+w(f,\psi)=0
$$
as well. Thus, $\dA\subset T\subset\bA$. Likewise, using \eqref{e-21-star} one shows that $\dA\subset T^*\subset\bA^*$.

The following proposition was announced  by one of the authors (E.T.) in \cite{TSh1} and we present its proof below for convenience of the reader.
\begin{proposition}\label{t3-21}
Let $T\in\Lambda(\dA)$ and $A$ be a self-adjoint extension of $\dA$ such that $U$ defines $\dom(A)$ via \eqref{DOMHAT} and $\calK$ defines $T$ via \eqref{e4-60}.
 Then $\bA$ is a $(*)$-extension of $T$ whose real part $\RE\bA$ has the quasi-kernel $A$ if and only if $U\calK^*-I$ is a homeomorphism and the operator parameter $H$ in \eqref{opH}-\eqref{e4-62} takes the form
\begin{equation}\label{e3-39-new}
H=i(I-\calK^*\calK)^{-1}[(I-\calK^*U)(I-U^*\calK)^{-1}-\calK^*U]U^*.
\end{equation}
\end{proposition}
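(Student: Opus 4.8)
The plan is to reduce the whole biconditional to a single operator equation for the parameter $H$ and then solve it. Recall that $\bA$ is a $(*)$-extension of $T$ exactly when $\RE\bA$ is $t$-self-adjoint, i.e.\ when the quasi-kernel of $\RE\bA$ is self-adjoint; the extra demand is that this quasi-kernel be the prescribed extension $A$. By \eqref{e4-61} one has $\RE\bA=\dA^*+\calR^{-1}\,\Sigma\,P_\calM^+$, where $\Sigma=\tfrac12(S_\bA+S_{\bA^*})$ is the matrix displayed in \eqref{e-12-real}. Since $\dA^*f\in\calH$ for every $f\in\calH_+$, and since (by the description of $(*)$-extensions and their real-part quasi-kernels in \cite[Section 4.3]{ABT}) one has $\calR^{-1}w\notin\calH$ for $0\neq w\in\calM$, the quasi-kernel domain is precisely $\{f\in\calH_+:\Sigma P_\calM^+f=0\}$. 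Writing a vector of $\dom(A)=\DdA\oplus(I+U)\sN_i$ in $(+)$-orthogonal coordinates as $(f_i,f_{-i})=(h,Uh)$, the inclusion $\dom(A)\subseteq\dom(\text{quasi-kernel})$ is exactly the requirement that $\Sigma$ annihilate the graph $\{(h,Uh):h\in\sN_i\}$ of $U$, i.e.\ $\Sigma\,(I,U)^{\top}=0$ as an operator $\sN_i\to\calM$. Because $A$ is self-adjoint and hence maximal among symmetric extensions, this inclusion forces equality, so $\RE\bA$ is then automatically $t$-self-adjoint with quasi-kernel $A$; thus the statement is equivalent to solving $\Sigma\,(I,U)^{\top}=0$ for $H$.

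First I would expand the two block-rows using \eqref{e-12-real}, collecting the $H$-, $H^*$- and constant-terms. Introducing $P=U-\calK$ and $Q=\calK^*U-I$, the system becomes $HP+\calK^*H^*Q+iQ=0$ (top row) and $-\calK HP-H^*Q+iP=0$ (bottom row). The homeomorphism hypothesis enters here: since $\calK^*U$ and $U\calK^*$ differ by the $I-AB$ versus $I-BA$ interchange, ``$U\calK^*-I$ is a homeomorphism'' is equivalent to invertibility of $Q$, and (using that $U$ is unitary from $\sN_i$ onto $\sN_{-i}$, so $P=U(I-U^*\calK)$) also to invertibility of $P$. Solving the bottom row for $H^*=(iI-\calK H)PQ^{-1}$ and substituting into the top row eliminates $H^*$ and yields $(I-\calK^*\calK)HP=-i\bigl(2\calK^*U-\calK^*\calK-I\bigr)$. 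Inverting $I-\calK^*\calK$ and writing $P^{-1}=(I-U^*\calK)^{-1}U^*$ gives a closed expression for $H$; a short simplification using $UU^*=I$ converts $-i(2\calK^*U-\calK^*\calK-I)(I-U^*\calK)^{-1}$ into $i\bigl[(I-\calK^*U)(I-U^*\calK)^{-1}-\calK^*U\bigr]$, producing exactly \eqref{e3-39-new}. For the converse direction I would take \eqref{e3-39-new} as the definition of $H$ and verify directly that both block-rows vanish, the only non-formal input again being the unitarity relations $U^*U=I$ and $UU^*=I$.

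The main obstacle is the algebra of the second paragraph: the equation $\Sigma\,(I,U)^{\top}=0$ couples $H$ and $H^*$, so $H$ cannot be read off from a single row, and the elimination must be done so that the $H^*$ extracted from the bottom row is genuinely the adjoint of the $H$ produced by the top row. This consistency is not automatic for arbitrary $U$; it is precisely the self-adjointness of $A$ (equivalently, the unitarity of $U$, which collapses $\calK^*UU^*\calK$ to $\calK^*\calK$) that makes the two rows compatible and $H$ well defined. A secondary, bookkeeping-level point is to track the invertibility conditions on $U\calK^*-I$, on $Q=\calK^*U-I$, and on $P=U-\calK$, and to confirm they are mutually equivalent, so that the stated homeomorphism hypothesis is exactly the one under which the formula \eqref{e3-39-new} is meaningful.
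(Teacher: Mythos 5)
Your reduction is sound and, in substance, reproduces the paper's own argument: your two block-rows are exactly the paper's system \eqref{e-System} after the substitutions $\ti\calK=U^*\calK$, $X=HU$, $P=U(I-U^*\calK)$, $Q=-(I-\calK^*U)$ (the paper gets this system by citing \cite[Theorem 3.4.10]{ABT}, whereas you derive it from the quasi-kernel description; your supporting facts are correct, since $\calR^{-1}w\in\calH$ with $w\in\calM$ forces $w=0$ by density of $\dom(\dA)$ and $(+)$-orthogonality, and the maximality argument works because the quasi-kernel of the self-adjoint bi-extension $\RE\bA$ is automatically symmetric). Your elimination of $H^*$ and the simplification to \eqref{e3-39-new} check out, and the ``direct verification'' you propose for the converse is precisely what the paper does with its explicit solution \eqref{e-18-X}.

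The genuine gap is in the ``only if'' direction. The proposition asserts that if $\bA$ is a $(*)$-extension of $T$ whose real part has quasi-kernel $A$, then $U\calK^*-I$ \emph{is} a homeomorphism (and then $H$ has the stated form). You never derive this from the hypothesis: you introduce invertibility of $Q$ and $P$ as a standing assumption needed to solve the bottom row, and your closing remark only records that invertibility of $U\calK^*-I$, of $Q$, and of $P$ are mutually equivalent --- true, but beside the point, since none of them is shown to follow from the existence of the $(*)$-extension. What you actually prove is the weaker statement: \emph{assuming} $U\calK^*-I$ is a homeomorphism, the quasi-kernel equals $A$ if and only if $H$ is given by \eqref{e3-39-new}. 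The paper closes this step by citing \cite[Remark 4.3.4]{ABT}. A self-contained fix along your lines: if $Ph=0$ for some $0\ne h\in\sN_i$, then $f=(I+U)h=(I+\calK)h$ lies in $\dom(A)\cap\dom(T)$; since $\bA f=Tf=\dA^*f$ and $\RE\bA f=Af=\dA^*f$, one gets $\bA^*f=\dA^*f\in\calH$, so $f$ lies in the quasi-kernel domain of $\bA^*$, i.e.\ $f\in\dom(T^*)$; hence $f\in\dom(T)\cap\dom(T^*)=\dom(\dA)$, contradicting the directness of the decomposition \eqref{DOMHAT}--\eqref{e4-60}. Injectivity of $P$ together with $\dim\sN_{\pm i}<\infty$ then gives invertibility, hence the homeomorphism property. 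Note that in the deficiency $(1,1)$ case this issue is invisible ($|\calK|<1$, $|U|=1$ make $U\calK^*-I$ automatically invertible), but the proposition is stated for general indices $(n,n)$, where the condition is a real restriction.
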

\begin{proof}
First, we are going to show that $\RE\bA$ has the quasi-kernel $A$ if and only if the system of operator equations
\begin{equation}\label{e-System}
    \begin{aligned}
    X^*(I-\ti\calK^*)+\ti\calK X(\ti\calK-I)&=i(\ti\calK-I)\\
    \ti\calK^* X^*(\ti\calK^*-I)+X(I-\ti\calK)&=i(I-\ti\calK^*)
    \end{aligned}
\end{equation}
has a solution. Here $\ti\calK=U^*\calK$. Suppose  $\RE\bA$ has the quasi-kernel $A$ and $U$ defines $\dom(A)$ via \eqref{DOMHAT}. Then there exists a self-adjoint operator $H\in[\sN_{-i},\sN_i]$ such that $\bA$ and $\bA^*$ are defined via \eqref{e4-61} where $S_{\bA}$ and  $S_{\bA^*}$ are of the form \eqref{e4-62}. Then $\frac{1}{2}(S_{\bA}+S_{\bA^*})$ is given by \eqref{e-12-real}. According to  \cite[Theorem 3.4.10]{ABT} the entries of the operator matrix \eqref{e-12-real} are related by the following
$$
\begin{aligned}
-H\calK-\calK^*H^*-iI &=- (H+(\calK^*H^*+iI)\calK^*)U, \\
              \calK(H\calK-iI)+H^* &=-( iI-\calK H-H^*\calK^*)U.
\end{aligned}
$$
Denoting $\ti\calK=U^*\calK$ and $\ti H=H U$, we obtain
$$
\begin{aligned}
\ti H^*(I-\ti\calK^*)+\ti\calK\ti H(\ti\calK-I) &=i(\ti\calK-I), \\
\ti\calK^*\ti H^*(\ti\calK^*-I)+\ti H(I-\ti\calK)&=i(I-\ti\calK^*),
\end{aligned}
$$
and hence $\ti H$ is the solution to the system \eqref{e-System}. To show the converse we simply reverse the argument.

Now assume that $U\calK^*-I$ is a homeomorphism. We are going to prove that the operator $T$ from the statement of the theorem has a unique $(*)$-extension $\bA$ whose real part $\RE\bA$ has the quasi-kernel $A$ that is a self-adjoint extension of $\dA$ parameterized via $U$. Consider the system \eqref{e-System}. If we multiply the first equation of \eqref{e-System} by $\ti\calK^*$ and add it to the second, we obtain
$$
(I-\ti\calK^*\ti\calK)X(I-\ti\calK)=i(\ti\calK^*(\ti\calK-I)+(I-\ti\calK^*)).
$$
Since $I-\ti\calK^*\ti\calK=I-\calK^*\calK$, $I-\ti\calK^*=I-U^*\calK$, and $T\in\Lambda(\dA)$, then the operators $I-\ti\calK^*\ti\calK$ and $I-\ti\calK$ are boundedly invertible. Therefore,
\begin{equation}\label{e-18-X}
    X=i(I-\ti\calK^*\ti\calK)^{-1}[(I-\ti\calK^*)(I-\ti\calK)^{-1}-\ti\calK^*)].
\end{equation}
By the direct substitution one confirms that the operator $X$ in \eqref{e-18-X} is a solution to the system \eqref{e-System}. Applying the uniqueness result \cite[Theorem 4.4.6]{ABT} and the above reasoning we conclude that our operator $T$ has a unique $(*)$-extension $\bA$ whose real part $\RE\bA$ has the quasi-kernel $A$.
If, on the other hand, $\bA$ is a $(*)$-extension whose real part $\RE\bA$ has the quasi-kernel $A$ that is a self-adjoint extension of $\dA$ parameterized via $U$, then $U\calK^*-I$ is a homeomorphism (see \cite[Remark 4.3.4]{ABT}).

Combining the two parts of the proof,  replacing $\ti\calK$ with $U^*\calK$, and $X$ with $\ti H=HU$ in \eqref{e-18-X} we complete the  proof of the theorem.
\end{proof}

Suppose that for the case of deficiency indices $(1,1)$ we have $\calK=\calK^*=\bar \calK=\kappa$\footnote{Throughout this paper  $\kappa$ will be
called the von Neumann  parameter.} and $U=1$. Then formula \eqref{e3-39-new} becomes
$$
H=\frac{i}{1-\kappa^2}[(1-\kappa)(1-\kappa)^{-1}-\kappa]=\frac{i}{1+\kappa}.
$$
Consequently, applying this value of $H$ to \eqref{e4-62} yields
\begin{equation}\label{e-15}
    S_\bA=\left(
            \begin{array}{cc}
              -\frac{i\kappa}{1+\kappa} & \frac{i}{1+\kappa} \\
              \frac{i\kappa^2}{1+\kappa}-i\kappa & i-\frac{i\kappa}{1+\kappa} \\
            \end{array}
          \right),\quad
S_{\bA^*}=\left(
            \begin{array}{cc}
              \frac{i\kappa}{1+\kappa}-i & -\frac{i\kappa^2}{1+\kappa}+i\kappa  \\
              -\frac{i}{1+\kappa} & \frac{i\kappa}{1+\kappa} \\
            \end{array}
          \right).
\end{equation}
Performing direct calculations gives
\begin{equation}\label{e-16}
    \frac{1}{2i}(S_{\bA}-S_{\bA^*})=\frac{1-\kappa}{2+2\kappa}\left(
            \begin{array}{cc}
              1 & 1 \\
              1 & 1 \\
            \end{array}
          \right).
\end{equation}
Using \eqref{e-16} with \eqref{e3-40} one obtains
\begin{equation}\label{e-17}
    \begin{aligned}
    \IM\bA&=\frac{1-\kappa}{2+2\kappa}\,\Big([(\cdot,\varphi)+(\cdot,\psi)]\varphi+ [(\cdot,\varphi)+(\cdot,\psi)]\psi\Big)\\
    &=\frac{1-\kappa}{2+2\kappa}\,(\cdot,\varphi+\psi)(\varphi+ \psi)\\
    &=(\cdot,\chi)\chi,
    \end{aligned}
\end{equation}
where
\begin{equation}\label{e-18}
    \chi=\sqrt{\frac{1-\kappa}{2+2\kappa}}\,(\varphi+ \psi)=\sqrt{\frac{1-\kappa}{1+\kappa}}\left(\frac{1}{\sqrt2}\,\varphi+ \frac{1}{\sqrt2}\,\psi\right).
\end{equation}

Consider a special case when $\kappa=0$. Then the corresponding ($*$)-extension $\bA_0$ is such that
\begin{equation}\label{e-19}
    \IM\bA_0=\frac{1}{2}(\cdot,\varphi+\psi)(\varphi+ \psi)=(\cdot,\chi_0)\chi_0,
    \end{equation}
where
\begin{equation}\label{e-20}
    \chi_0=\frac{1}{\sqrt2}\,\left(\varphi+\psi\right).
\end{equation}

\section{The Liv\v{s}ic function}\label{s4}

Suppose that $\dA$ is closed, prime,  densely defined symmetric operator with deficiency indices $(1,1)$.
In \cite[a part of Theorem 13]{L} (for a textbook exposition  see \cite{AG93}) M. Liv\v{s}ic suggested to call the function
\begin{equation}\label{charsym}
s(z)=\frac{z-i}{z+i}\cdot \frac{(g_z, g_-)}{(g_z, g_+)}, \quad z\in \bbC_+,
\end{equation}
{\it the characteristic function} of the symmetric operator $\dot A$. Here $g_\pm\in \Ker( \dA^*\mp iI)$ are normalized appropriately chosen deficiency elements and
$ g_z\ne 0$ are arbitrary  deficiency elements of the symmetric operators $\dot A$.
The Liv\v{s}ic result identifies the function $s(z)$ (modulo $z$-independent unimodular factor) with
a complete unitary invariant  of a prime  symmetric operator with deficiency indices $(1,1)$
that  determines the operator uniquely up to unitary equivalence.
He also gave  the following criterion \cite[Theorem 15]{L} (also see \cite{AG93})
 for a contractive analytic mapping
 from the upper half-plane  $\bbC_+$ to the unit disk
$\bbD$ to  be the characteristic function of a densely defined
symmetric operator  with deficiency indices $(1,1)$.

\begin{theorem}[\cite{L}]\label{thm12}\label{nach1} For an
 analytic mapping $s$ from the upper half-plane to the unit disk
to be the characteristic function of a densely defined
symmetric operator  with deficiency indices $(1,1)$
it is necessary and sufficient that
\begin{equation}\label{vsea0}
s(i)=0\quad \text{and}\quad \lim_{z\to \infty}
z(s(z)-e^{2i\alpha})=\infty \quad \text{for all} \quad  \alpha\in
[0, \pi),
\end{equation}
$$
0< \varepsilon \le \text{arg} (z)\le \pi -\varepsilon.
$$
\end{theorem}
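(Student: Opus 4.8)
The plan is to pass from the contractive function $s$ to an associated Herglotz--Nevanlinna function by a Cayley transform and to reduce the statement to the classical description of the Weyl--Titchmarsh functions of densely defined symmetric operators with deficiency indices $(1,1)$. To a nonconstant analytic $s\colon\bbC_+\to\bbD$ I associate
\[
M(z)=i\,\frac{1+s(z)}{1-s(z)},\qquad s(z)=\frac{M(z)-i}{M(z)+i},
\]
and one checks that $s$ maps $\bbC_+$ into $\bbD$ precisely when $M$ maps $\bbC_+$ into the open upper half-plane, i.e. $M$ is Herglotz--Nevanlinna. Under this correspondence $s(i)=0$ is equivalent to $M(i)=i$, so the first condition in \eqref{vsea0} only fixes the normalization $M(i)=i$ and carries no structural information. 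I would begin by recording that for a characteristic function \eqref{charsym} the function $M$ so obtained coincides, up to this normalization, with the Weyl--Titchmarsh function of the pair $(\dA,A)$ for a suitable self-adjoint reference extension $A$ of $\dA$; this is the object whose analytic fingerprints are well understood.

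For the necessity direction, the analyticity and contractivity of $s$ belong to Liv\v{s}ic's basic theory (the ratio in \eqref{charsym} is independent of the scaling of the deficiency vector $g_z$, depends analytically on $z$, and is bounded by $1$ via Cauchy--Schwarz together with the computation of $\|g_z\|$; see \cite{L}, \cite{AG93}). The equality $s(i)=0$ is immediate, since at $z=i$ the vector $g_i$ is a nonzero multiple of $g_+$, whence $(g_i,g_+)\ne0$ while the prefactor $\tfrac{z-i}{z+i}$ vanishes. The substantive point is to convert the hypothesis that $\dA$ is \emph{densely defined} into the asymptotic condition. For a prime densely defined $\dA$ of deficiency $(1,1)$ the Weyl function satisfies $M(iy)/(iy)\to0$ and $y\,\Im M(iy)\to+\infty$ as $y\to+\infty$; equivalently, the representing measure of $M$ has vanishing linear term and infinite total mass. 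A direct computation with $s=\tfrac{M-i}{M+i}$ then yields \eqref{vsea0}: for $e^{2i\alpha}=1$ one has $z(s(z)-1)=-2iz/(M(z)+i)$, so the $\alpha=0$ instance is equivalent to $M(z)/z\to0$; for the unique $\alpha$ with $\tfrac{c-i}{c+i}=e^{2i\alpha}$ attached to a finite limit $c$ one gets $z(s(z)-e^{2i\alpha})$ comparable to $z(M(z)-c)$, whose divergence is forced by the infinite mass, while all remaining $\alpha$ give the divergence trivially.

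For the sufficiency direction I would run the construction in reverse. Given $s\colon\bbC_+\to\bbD$ analytic with $s(i)=0$ and satisfying \eqref{vsea0}, I form $M(z)=i\tfrac{1+s(z)}{1-s(z)}$, a Herglotz--Nevanlinna function with $M(i)=i$, and observe (as above) that \eqref{vsea0} forces the Herglotz representation of $M$ to have no linear term and an infinite-mass representing measure $\mu$. I then realize $M$ as a Weyl function through the standard functional model: let $A$ be multiplication by the independent variable on $L^2(\bbR,d\mu)$, let $\dA$ be its symmetric restriction to the natural codimension-one domain, and verify that $\dA$ is closed, prime, densely defined, and of deficiency $(1,1)$. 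Finally, evaluating \eqref{charsym} for this model returns the Cayley transform of $M$ up to a unimodular factor; matching the value at $z=i$ (both sides vanish there) pins the factor down, and Liv\v{s}ic's uniqueness theorem identifies the model's characteristic function with the prescribed $s$.

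The principal obstacle lies in the sufficiency step, and within it in two linked points. First, the single asymptotic requirement \eqref{vsea0} must be translated \emph{quantitatively} into the pair ``no linear term'' and ``infinite total mass'' for $\mu$, keeping careful track of which value $e^{2i\alpha}$ is approached and confirming that the remaining directions contribute no obstruction. Second, the functional model must be shown to yield a genuinely densely defined operator of deficiency exactly $(1,1)$: it is precisely the infinite-mass condition that expels the relevant deficiency vector from the domain (so that $\dA$ is densely defined rather than a proper symmetric relation) and the vanishing linear term that excludes a mass at infinity. Once these are in place, the uniqueness of the operator recovered from $s$, already quoted in the text as part of Liv\v{s}ic's theorem, reduces the remaining normalization bookkeeping to a routine verification.
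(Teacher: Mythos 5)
The paper states Theorem \ref{thm12} without proof: it is quoted from Liv\v{s}ic \cite{L} (see also \cite{AG93}) as a known criterion, so there is no in-paper argument to compare against, and the only possible comparison is with Liv\v{s}ic's original, self-contained function-theoretic proof. Your route is different from that and is correct in substance: it is the modern reduction, via the Cayley transform $M(z)=i\,\frac{1+s(z)}{1-s(z)}$ (the inverse of \eqref{blog}), to the known description of the Donoghue class $\mM$ as the set of Weyl--Titchmarsh functions $M(\dA,A)$ of densely defined symmetric operators with deficiency indices $(1,1)$ --- a fact the paper itself quotes with references \cite{D}, \cite{GMT}, \cite{GT}, \cite{MT-S}. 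Your two computational pivots are right: $z(s(z)-1)=-2iz/(M(z)+i)$ shows that the $\alpha=0$ instance of \eqref{vsea0} is exactly the absence of a linear term ($M(z)/z\to 0$ in sectors), while for $e^{2i\alpha}=(c-i)/(c+i)$, $c\in\bbR$, the identity $s(z)-e^{2i\alpha}=2i\,(M(z)-c)/\bigl[(M(z)+i)(c+i)\bigr]$ together with $|z(M(z)-c)|\ge |z|\,\IM M(z)\ge \sin\varepsilon \int_\bbR |z|^2|\lambda-z|^{-2}d\mu(\lambda)\to\mu(\bbR)$ (Fatou) shows that the remaining instances encode $\mu(\bbR)=\infty$; combined with $s(i)=0$ this is precisely membership of $M$ in $\mM$, i.e. \eqref{hernev}--\eqref{e-32-norm-m}. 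Sufficiency then indeed follows from the model \eqref{nacha1}--\eqref{nacha2} (multiplication by $\lambda$ in $L^2(\bbR;d\mu)$ restricted to $\{f:\int_\bbR f\,d\mu=0\}$), which is closed and densely defined precisely because $\mu(\bbR)=\infty$, has deficiency indices $(1,1)$, and has Weyl--Titchmarsh function $M$, hence Liv\v{s}ic function $s$. What your proof buys is economy, since the hard analysis is outsourced to the cited realization theorem; what the original argument buys is independence of that machinery.

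Two details in your sufficiency step should be repaired, though neither is fatal. You claim the model's characteristic function agrees with the prescribed $s$ ``up to a unimodular factor'' which is then ``pinned down'' by matching values at $z=i$; since both functions vanish at $z=i$, matching there can pin down nothing. In fact nothing needs pinning: the unimodular ambiguity in \eqref{charsym} corresponds exactly to the freedom of rotating $g_-$, and conditions \eqref{vsea0} are invariant under $s\mapsto e^{i\theta}s$ (such a rotation merely permutes the values $e^{2i\alpha}$), so either one says that $s$ is the characteristic function for a suitable choice of deficiency basis, or one uses the normalized definition \eqref{star}--\eqref{charf12}, for which the model's Liv\v{s}ic function equals $(M-i)/(M+i)=s$ exactly, with no factor. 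Similarly, the appeal to Liv\v{s}ic's uniqueness theorem is superfluous in this direction: sufficiency requires only the exhibition of one densely defined symmetric operator whose characteristic function is $s$, which the model provides.
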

\noindent
The \textbf{Liv\v{s}ic class} of functions described by Theorem \ref{thm12} will be denoted by $\sL$.

In the same article, Liv\v{s}ic put forward a concept of a characteristic function of a quasi-self-adjoint dissipative extension
of a symmetric operator with deficiency indices $(1,1)$.

Let us recall Liv\v{s}ic's construction. Suppose that $\dot A$ is a symmetric operator with deficiency indices $(1,1)$
and that $g_\pm$ are its normalized deficiency elements,
$$
g_\pm \in \Ker (\dA^*\mp i I), \quad \|g_\pm\|=1.
$$
Suppose that $T \ne (T )^*$ is a  maximal dissipative extension of $\dot A$,
$$\Im(T f,f)\ge 0, \quad f\in \dom(T )
.$$
Since $\dot A$ is symmetric, its dissipative extension $T $
is automatically quasi-self-adjoint \cite{ABT}, \cite{St68},
that  is,
$$
\dot A \subset T \subset \dA^*,
$$
and hence, according to \eqref{e4-60} with $\calK=\kappa$,
\begin{equation}\label{parpar}
g_+-\kappa g_-\in \dom
 (T )\quad \text{for some }
|\kappa|<1.
\end{equation}
Based on the  parametrization \eqref{parpar} of the domain of the
 extension $T $, Liv\v{s}ic suggested to call  the M\"obius transformation
\begin{equation}\label{ch12}
S(z)=\frac{s(z)-\kappa} {\overline{ \kappa }\,s(z)-1}, \quad z\in \bbC_+,
\end{equation}
where $s$ is given  by \eqref{charsym}, the \textbf{characteristic function} of the dissipative extension $T$ \cite{L}.
All functions that satisfy \eqref{ch12} for some function $s(z)\in\sL$ will form the \textbf{Liv\v{s}ic class} $\sL_\kappa$. Clearly, $\sL_0=\sL$.

A culminating point of Liv\v{s}ic's considerations was the discovery that the characteristic function $S(z)$ (up to a unimodular factor) of a dissipative (maximal) extension $T$ of a densely defined prime symmetric operator $\dot A$ is a complete unitary invariant of T (see  \cite[the remaining  part of Theorem 13]{L}).

In 1965  Donoghue \cite{D}
 introduced a concept of the Weyl-Titchmarsh function $M(\dot A, A)$
associated with a pair $(\dot A, A)$
by
$$M(\dot A, A)(z)=
((Az+I)(A-zI)^{-1}g_+,g_+), \quad z\in \bbC_+,
$$
$$g_+\in \Ker( \dA^*-iI),\quad \|g_+\|=1,
$$where $\dot A $ is
 a symmetric operator with deficiency indices $(1,1)$,
$\text{def}(\dot A)=(1,1)$,
 and $A$ is
its self-adjoint extension.

Denote by $\mM$ the \textbf{Donoghue class} of all analytic mappings $M$ from $\bbC_+$ into itself  that admits the representation
 \begin{equation}\label{hernev}
M(z)=\int_\bbR \left
(\frac{1}{\lambda-z}-\frac{\lambda}{1+\lambda^2}\right )
d\mu,
\end{equation}
where $\mu$ is an  infinite Borel measure   and
\begin{equation}\label{e-32-norm-m}
\int_\bbR\frac{d\mu(\lambda)}{1+\lambda^2}=1\,,\quad\text{equivalently,}\quad M(i)=i.
\end{equation}

It is known  \cite{D}, \cite{GMT}, \cite{GT}, \cite {MT-S} that $M\in \mM$ if and only if $M$
can be realized  as the Weyl-Titchmarsh function $M(\dot A, A)$ associated with a pair $(\dot A, A)$.
  The Weyl-Titchmarsh function $M$ is a (complete) unitary invariant  of the pair of a  symmetric operator with deficiency indices $(1,1)$ and its self-adjoint extension
 and determines the pair of operators uniquely up to unitary equivalence.

 Liv\v{s}ic's definition  of a characteristic function
of a symmetric operator (see eq. \eqref{charsym})
has some ambiguity related to the
 choice of the deficiency elements $g_\pm$.
To avoid this ambiguity we proceed as follows.
Suppose that  $A$ is a self-adjoint extension of a symmetric operator
$\dot A$ with deficiency indices $(1,1)$. Let
$g_\pm$  be deficiency elements $g_\pm\in \Ker ((\dot A)^*\mp iI)$,
$\|g_+\|=1$. Assume, in addition, that
\begin{equation}\label{star}
g_+-g_-\in \dom( A).
\end{equation}
Following \cite{MT-S} we introduce   the \textit{Liv\v{s}ic function} $s(\dot A, A)$
 \emph{associated with the pair} $(\dot A, A)$ by
\begin{equation}\label{charf12}
s(z)=\frac{z-i}{z+i}\cdot \frac{(g_z, g_-)}{(g_z, g_+)}, \quad
z\in \bbC_+,
\end{equation}
where $0\ne g_z\in \Ker((\dot A)^*-zI )$ is   an arbitrary
(deficiency) element.

A standard relationship between the  Weyl-Titch\-marsh and the    Liv\v{s}ic functions associated with the  pair
$(\dA, A) $ was described in \cite {MT-S}. In particular, if we denote by $M=M(\dot A, A)$ and by $s=s(\dot A, A)$
  the Weyl-Titchmarsh function and the    Liv\v{s}ic function  associated with the pair $(\dot A, A)$, respectively,
then
\begin{equation}\label{blog}
s(z)=\frac{M(z)-i}{M(z)+i},\quad z\in \bbC_+.
\end{equation}

\begin{hypothesis}\label{setup} Suppose
that $\whA \ne\whA^*$  is  a maximal
dissipative extension of  a symmetric operator $\dot A$
 with deficiency indices $(1,1)$. Assume, in addition, that $A$ is a  self-adjoint (reference) extension of $\dot A$. Suppose,  that the
deficiency elements $g_\pm\in \Ker (\dA^*\mp iI)$ are
normalized, $\|g_\pm\|=1$, and chosen in such a way that
\begin{equation}\label{ddoomm14}g_+- g_-\in \dom ( A)\,\,\,\text{and}\,\,\,
g_+-\kappa g_-\in \dom (\whA )\,\,\,\text{for some }
\,\,\,|\kappa|<1.
\end{equation}
\end{hypothesis}

Under Hypothesis \ref{setup}, we  introduce  the characteristic
function $S=S( \dot A, \whA , A)$
associated with the triple of operators $( \dot A, \whA , A)$
as the M\"obius transformation
\begin{equation}\label{ch1}
S(z)=\frac{s(z)-\kappa} {\overline{ \kappa }\,s(z)-1}, \quad z\in \bbC_+,
\end{equation}
of the Liv\v{s}ic function $s=s(\dot A, A)$ associated with the pair
$(\dot A, A)$.

We remark that given  a triple  $( \dot A, \whA , A)$, one can always find a
basis $g_\pm$ in the deficiency subspace
 $\Ker (\dA^*-iI)\dot +\Ker (\dA^*+iI)$,
$$\|g_\pm\|=1, \quad g_\pm\in \Ker (\dA^*\mp iI),
$$ such that
$$
g_+-g_-\in \dom (A)
\quad \text{
and}\quad
g_+-\kappa g_-\in \dom (\whA ),
$$
and then, in this case,
\begin{equation}\label{assa}
\kappa =S( \dot A, \whA , A)(i).
\end{equation}

Our next goal is to provide a {\it functional model} of a prime
 dissipative triple\footnote{We call a triple $(\dot A, \whA , A)$
 a prime triple if $\dot A$ is a prime symmetric operator.} parameterized by the characteristic
function and obtained in \cite{MT-S}.

Given a contractive analytic map $S$,
\begin{equation}\label{chchch}
S(z)=\frac{s(z)-\kappa} {\overline{ \kappa }\,s(z)-1}, \quad z\in \bbC_+,
\end{equation}
where $|\kappa|<1$ and $s$ is
an  analytic, contractive function in $\bbC_+$
satisfying the Liv\v{s}ic criterion \eqref{vsea0}, we use \eqref{blog} to introduce the function
$$
M(z)=\frac1i\cdot\frac{s(z)+1}{s(z)-1},\quad z\in \bbC_+,
$$
so that
$$
M(z)=\int_\bbR \left
(\frac{1}{\lambda-z}-\frac{\lambda}{1+\lambda^2}\right )
d\mu(\lambda), \quad z\in \bbC_+,
$$
for some infinite Borel measure with
$$
\int_\bbR\frac{d\mu(\lambda)}{1+\lambda^2}=1.
$$

In the Hilbert space $L^2(\bbR;d\mu)$ introduce
  the multiplication (self-adjoint)
operator by the  independent variable $\cB$
 on
\begin{equation}\label{nacha1}
\dom(\cB)=\left \{f\in \,L^2(\bbR;d\mu) \,\bigg | \, \int_\bbR
\lambda^2 | f(\lambda)|^2d \mu(\lambda)<\infty \right \},
\end{equation} denote by  $\dot \cB$  its
 restriction
on
\begin{equation}\label{nacha2}
\dom(\dot \cB)=\left \{f\in \dom(\cB)\, \bigg | \, \int_\bbR
f(\lambda)d \mu(\lambda) =0\right \},
\end{equation}
and let
 $\whB $ be   the dissipative restriction of the operator  $(\dot \cB)^*$
 on
\begin{equation}\label{nacha3}
\dom(\whB )=\dom (\dot \cB)\dot +\linspan\left
\{\,\frac{1}{\cdot -i}- S(i)\frac{1}{\cdot +i}\right \}.
\end{equation}

We will refer to the triple  $(\dot \cB,   \whB ,\cB)$ as
{\it  the model
 triple } in the Hilbert space $L^2(\bbR;d\mu)$.

It was established in \cite{MT-S} that a triple $(\dot A,\whA ,A)$
with the characteristic function $S$
is unitarily equivalent to the model triple
$(\dot \cB,   \whB ,\cB)$
in the Hilbert space $L^2(\bbR;d\mu)$
whenever the underlying symmetric operator $\dot A$ is prime.
The triple $(\dot \cB,   \whB ,\cB)$ will therefore be called {\it the functional model} for $(\dA, T, A)$.

It was pointed out in \cite{MT-S}, if $\kappa=0$, the quasi-self-adjoint extension $\whA $
coincides with the restriction of the adjoint operator $(\dot A)^*$ on
$$
\dom(\whA )=\dom(\dot A)\dot + \Ker (\dA^*-iI).
$$
and the  prime triples $(\dot A, \whA , A)$ with
$\kappa=0$
 are  in a one-to-one correspondence with the set of prime symmetric operators.
In this case, the characteristic function $S$
and the Liv\v{s}ic function $s$ coincide (up to a sign),
$$
S(z)=-s(z), \quad z\in \bbC_+.
$$

For the resolvents of the model dissipative operator $\whB$
and the self-adjoint (reference)
operator $\cB$ from the model  triple $(\dot \cB, \whB , \cB)$
 one gets
 the following resolvent formula.
\begin{proposition}[\cite{MT-S}] \label{t-11}
Suppose that
$(\dot \cB, \whB , \cB)$ is the model triple in the Hilbert space
$L^2(\bbR;d\mu) $.
 Then the resolvent  of the model dissipative operator $\whB $  in
$L^2(\bbR;d\mu) $ has the form
$$
(\whB - zI )^{-1}=(\cB- zI )^{-1}-p(z)(\cdot\, ,
g_{\overline{z}})g_z , $$ with
$$
p(z)=\left (M(\dot \cB,
\cB)(z)+i\frac{\kappa+1}{\kappa-1}\right )^{-1},
\quad z\in\rho(\whB )\cap \rho(\cB).$$
Here $M(\dot \cB,
\cB)$ is the Weyl-Titchmarsh function associated with the pair
 $(\dot \cB, \cB)$ continued to the lower half-plane by the Schwarz reflection
principle,
and the deficiency elements $g_z$ are given by
$$
g_z(\lambda)=\frac{1}{\lambda-z}, \quad
\,\, \text{$\mu$-a.e. }.
$$
\end{proposition}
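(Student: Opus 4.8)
The plan is to check directly that the rank-one modification of the self-adjoint resolvent written in the statement is the resolvent of $\whB$, and to pin down the scalar $p(z)$ in the process. Throughout I write $g_\pm(\lambda)=\frac{1}{\lambda\mp i}$ for the two distinguished deficiency elements of $\dot\cB$; the normalization $\int_\bbR(1+\lambda^2)^{-1}\,d\mu=1$ gives $\|g_\pm\|=1$, a one-line computation ($g_z\perp\ran(\dot\cB-\bar zI)$) shows $(\dot\cB)^*g_z=zg_z$ so that $g_z$ spans $\Ker((\dot\cB)^*-zI)$ and $g_\pm$ span $\sN_{\pm i}$, and since the Liv\v{s}ic criterion forces $s(i)=0$ we have $S(i)=\kappa$, whence by \eqref{nacha3} $\dom(\whB)=\dom(\dot\cB)\dot+\linspan\{g_+-\kappa g_-\}$.

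First I would observe that the ansatz produces a right inverse of $(\dot\cB)^*-zI$ for \emph{every} value of $p(z)$: since $(\cB-zI)^{-1}h\in\dom(\cB)\subset\dom((\dot\cB)^*)$ with $((\dot\cB)^*-zI)(\cB-zI)^{-1}h=h$, while $((\dot\cB)^*-zI)g_z=0$, the vector $f:=(\cB-zI)^{-1}h-p(z)(h,g_{\bar z})g_z$ satisfies $((\dot\cB)^*-zI)f=h$. As $\whB\subset(\dot\cB)^*$, it then suffices to fix $p(z)$ so that $f\in\dom(\whB)$; for $z\in\rho(\whB)$ this forces $f=(\whB-zI)^{-1}h$. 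The core step is therefore to turn $f\in\dom(\whB)$ into a computable scalar condition. Decomposing $u=u_0+a(u)g_++b(u)g_-$ along $\dom((\dot\cB)^*)=\dom(\dot\cB)\dot+\linspan\{g_+\}\dot+\linspan\{g_-\}$, membership $f\in\dom(\whB)$ means precisely $b(f)=-\kappa\,a(f)$, and the coefficients are recovered by
\[
a(u)=\tfrac{1}{2i}\bigl(((\dot\cB)^*+iI)u,\,g_+\bigr),\qquad b(u)=-\tfrac{1}{2i}\bigl(((\dot\cB)^*-iI)u,\,g_-\bigr),
\]
which follow from $((\dot\cB)^*\pm iI)u_0\in\ran(\dot\cB\pm iI)=\sN_{\pm i}^{\perp}$ together with $\|g_\pm\|=1$. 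Substituting $(\dot\cB)^*f=zf+h$ then converts $b(f)=-\kappa\,a(f)$ into $(z-i)(f,g_-)+(h,g_-)=\kappa\bigl[(z+i)(f,g_+)+(h,g_+)\bigr]$.

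It remains to evaluate the inner products and read off $p(z)$. The resolvent relations $g_z=g_\pm+(z\mp i)(\cB-zI)^{-1}g_\pm$ give $((\cB-zI)^{-1}h,g_\pm)=(z\pm i)^{-1}[(h,g_{\bar z})-(h,g_\pm)]$; feeding $f=(\cB-zI)^{-1}h-p(z)(h,g_{\bar z})g_z$ into the preceding condition, the $(h,g_\pm)$ contributions cancel and the common factor $(h,g_{\bar z})$ drops out, leaving the scalar identity $1-\kappa=p(z)\bigl[(z-i)(g_z,g_-)-\kappa(z+i)(g_z,g_+)\bigr]$. Finally the two Weyl-function identities $(z-i)(g_z,g_-)=M(z)-i$ and $(z+i)(g_z,g_+)=M(z)+i$ — each obtained from a partial-fraction split of the integral defining $(g_z,g_\mp)$ together with $M(\pm i)=\pm i$ — reduce the bracket to $(1-\kappa)M(z)-i(1+\kappa)$, and hence $p(z)=\bigl(M(z)+i\frac{\kappa+1}{\kappa-1}\bigr)^{-1}$, as claimed.

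The hard part is the bookkeeping forced by the fact that $g_\pm\notin\dom(\cB)$: when $\mu(\bbR)=\infty$ the constant function is not in $L^2(\bbR;d\mu)$, so the coefficients $a(u),b(u)$ cannot be extracted by a naive pairing, and the integrals $\int_\bbR(\lambda-z)^{-1}\,d\mu$ hidden inside $(g_z,g_\mp)$ diverge individually. Both difficulties are resolved by working only with the convergent combinations produced by $((\dot\cB)^*\pm iI)$ and by the partial-fraction differences that reproduce $M(z)-M(\pm i)$; keeping every quantity in this convergent form, and restricting to $z\in\rho(\whB)\cap\rho(\cB)$ where $M(z)+i\frac{\kappa+1}{\kappa-1}\neq0$ so that $p(z)$ is finite, is exactly where the care is needed.
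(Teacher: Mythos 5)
The paper contains no proof of Proposition \ref{t-11} to compare against: it is imported from \cite{MT-S} with only a citation. So your attempt has to be judged on its own, and it holds up. Your route --- observe that $f=(\cB-zI)^{-1}h-p(z)(h,g_{\bar z})g_z$ is a right inverse of $((\dot\cB)^*-zI)$ for \emph{every} scalar $p(z)$, then fix $p(z)$ by forcing $f\in\dom(\whB)$ --- is a clean direct verification. I checked the key computations: the coefficient formulas $a(u)=\tfrac{1}{2i}(((\dot\cB)^*+iI)u,g_+)$, $b(u)=-\tfrac{1}{2i}(((\dot\cB)^*-iI)u,g_-)$ are correct (they rest on $\ran(\dot\cB\pm iI)\perp\sN_{\pm i}$ and $\|g_\pm\|=1$); the resolvent identities $g_z=g_\pm+(z\mp i)(\cB-zI)^{-1}g_\pm$ hold by direct partial fractions; the cancellation of the $(h,g_\pm)$ terms is genuine; and the Weyl identities $(z-i)(g_z,g_-)=M(z)-i$, $(z+i)(g_z,g_+)=M(z)+i$ follow from the integral representation of $M(\dot\cB,\cB)$ with the normalization $\int_\bbR(1+\lambda^2)^{-1}d\mu=1$, yielding $p(z)=\bigl(M(z)+i\tfrac{\kappa+1}{\kappa-1}\bigr)^{-1}$ exactly as stated. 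One point should be tightened rather than assumed: you restrict to those $z\in\rho(\whB)\cap\rho(\cB)$ where $M(z)+i\tfrac{\kappa+1}{\kappa-1}\neq 0$, as if this condition might shrink the set on which the formula is proved. In fact it is automatic there: applying your own membership criterion $b(u)=-\kappa a(u)$ to $u=g_z$ shows that $(1-\kappa)M(z)-i(1+\kappa)=0$ is equivalent to $g_z\in\dom(\whB)$, and since $((\dot\cB)^*-zI)g_z=0$ this would make $z$ an eigenvalue of $\whB$, impossible for $z\in\rho(\whB)$. With that one-line addition your argument establishes the resolvent formula on all of $\rho(\whB)\cap\rho(\cB)$, precisely as the proposition asserts.
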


\section{Transfer function vs Liv\v{s}ic function}

In this section and below, without loss of generality,  we can assume that $\kappa$ is real and that $0\le\kappa<1$. Indeed, if $\kappa=|\kappa|e^{i\theta}$, then
change (the basis) $g_-$ to $e^{i\theta}g_-$ in the deficiency subspace  $\Ker (\dA^*+ i I)$, say.
Thus, for the remainder of this paper we  suppose that the von Neumann parameter $\kappa$ is real and $0\le\kappa<1$.

The theorem below is the principal result of the current paper.
\begin{theorem}\label{t-25}
Let
\begin{equation}\label{e-62}
\Theta= \begin{pmatrix} \bA&K&\ 1\cr \calH_+ \subset \calH \subset
\calH_-& &\dC\cr \end{pmatrix}
\end{equation}be an  L-system whose main operator $T$ and the quasi-kernel $\hat A$ of $\RE\bA$ satisfy Hypothesis \ref{setup} with the reference operator $A=\hat A$ and the von Neumann parameter $\kappa$. Then the transfer function $W_\Theta(z)$ and the characteristic
function $S(z)$ of the triple $(\dA, T,\hat A)$ are reciprocals of each other, i.e.,
\begin{equation}\label{e-60}
    W_{\Theta}(z)=\frac{1}{S(z)},\quad z\in\dC_+\cap\rho(T),
\end{equation}
 and $\frac{1}{W_\Theta(z)}\in\sL_\kappa$.
\end{theorem}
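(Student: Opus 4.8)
The plan is to compute the transfer function straight from its definition \eqref{e6-3-3} by solving the associated resolvent equation, and then to recognize the answer as a M\"obius transform of the Liv\v{s}ic function. First I would pin down the channel operator: since $\dim E=1$, $J=1$, and condition (3) forces $KJK^{*}=KK^{*}=\IM\bA$, the rank-one formula \eqref{e-17}--\eqref{e-18} identifies $K$ with the vector $\chi=\sqrt{\tfrac{1-\kappa}{1+\kappa}}\,\tfrac1{\sqrt2}(\varphi+\psi)$, so that $K^{*}f=(f,\chi)$ and
\[
W_\Theta(z)=1-2i\,\big((\bA-zI)^{-1}\chi,\chi\big),\qquad z\in\dC_+\cap\rho(T).
\]
Here I would use that Hypothesis \ref{setup} with $A=\hat A$ fixes the parametrization to $U=1$ and $\calK=\kappa$ real, so that $\bA$ is given by \eqref{e3-40} with the scalar coefficients computed in \eqref{e-15}; in particular $p=v$ and $q=w$, whence $\bA f=\dA^{*}f+\big(p(f,\varphi)+q(f,\psi)\big)(\varphi+\psi)$.

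The conceptual crux is the resolvent equation $(\bA-zI)x=\chi$. Writing $\calH_+=\dom(T)\oplus\sN_z$ for $z\in\rho(T)$ and splitting $x=x_T+x_N$ accordingly, I would observe that $(\bA-zI)x_T=(T-zI)x_T\in\calH$, while the remaining contribution is a scalar multiple of $\varphi+\psi\in\calH_-\setminus\calH$. Since $\chi$ itself is a multiple of $\varphi+\psi$ and lies outside $\calH$, matching the two sides forces the $\calH$-component to vanish: $(T-zI)x_T=0$, hence $x_T=0$, and $x=t\,g_z$ for a single deficiency element $g_z\in\Ker(\dA^{*}-zI)$. The scalar $t$ is then determined by the one remaining scalar condition $p(x,\varphi)+q(x,\psi)=\sqrt{\tfrac{1-\kappa}{1+\kappa}}\,\tfrac1{\sqrt2}$. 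This reduction --- that the resolvent applied to $\chi$ is automatically a deficiency element --- is exactly what makes the Liv\v{s}ic function appear.

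It then remains to evaluate $\big((\bA-zI)^{-1}\chi,\chi\big)=t\,(g_z,\chi)$ and simplify. I would convert every pairing into the inner products $(g_z,g_\pm)$ using $\varphi\in\calR^{-1}(\sN_i)$, $\psi\in\calR^{-1}(\sN_{-i})$, the Riesz--Berezansky identity $(f,g)=(f,\calR g)_+$, and the elementary relations $(g_z,g_\pm)_+=(1\mp iz)(g_z,g_\pm)$ that follow from \eqref{108} together with $\dA^{*}g_z=zg_z$. Both $t$ and $(g_z,\chi)$ then become explicit rational expressions in the ratio $(g_z,g_-)/(g_z,g_+)$, and since $\tfrac{1+iz}{1-iz}=-\tfrac{z-i}{z+i}$ this ratio is, up to the prefactor in \eqref{charsym}, precisely the Liv\v{s}ic function $s(z)$. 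Collecting terms and simplifying the resulting M\"obius expression (the normalization constants conspiring through $\sqrt{1-\kappa^{2}}=\sqrt{1-\kappa}\sqrt{1+\kappa}$) yields $W_\Theta(z)=\dfrac{\kappa\,s(z)-1}{s(z)-\kappa}=\dfrac1{S(z)}$ by the defining formula \eqref{ch1}.

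Finally, the assertion $1/W_\Theta\in\sL_\kappa$ is immediate: by the identity just established $1/W_\Theta(z)=S(z)$, and $S$ is by construction \eqref{ch1} the M\"obius transform, with real parameter $\kappa\in[0,1)$, of a Liv\v{s}ic function $s\in\sL$, which is precisely the defining property of the class $\sL_\kappa$. I expect the main obstacle to be bookkeeping rather than conceptual: keeping the normalization of $\chi$, the conjugations in the rigged-space pairing, and the orientation of the deficiency basis $g_\pm$ (fixed by $g_+-g_-\in\dom(\hat A)$) mutually consistent, so that the final M\"obius transform comes out as $1/S$ with the correct sign rather than its reciprocal or its negative. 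Passing to the functional model $(\dot\cB,\whB,\cB)$, where $g_z(\lambda)=1/(\lambda-z)$ is explicit and $\dA$ is prime, provides a convenient independent check of these constants.
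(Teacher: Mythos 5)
Your proposal is correct in substance and takes a genuinely different route from the paper. The paper never solves the resolvent equation for the abstract $\bA$: it passes to the functional model $(\dot \cB, \whB, \cB)$ in $L^2(\bbR;d\mu)$, invokes the resolvent formula of Proposition \ref{t-11} to compute the transfer function of a model L-system (solving the quadratic relation \eqref{e-53} and treating the degenerate case $M_0\equiv i$ separately), handles $\kappa\ne 0$ through the impedance identity \eqref{e-vtheta}, and only then carries the result back to the given system via minimality (reduction to the principal part) and the constant $J$-unitary factor and bi-unitary equivalence theorems of \cite{ABT}. You instead stay inside the given L-system and exploit the explicit rank-one parametrization \eqref{e3-40}, \eqref{e-15}--\eqref{e-18}, which the paper derives in Section \ref{s3} precisely under the normalization $U=1$, $\calK=\kappa$ forced by Hypothesis \ref{setup}; note that to know the given $\bA$ must have this form you are implicitly using Proposition \ref{t3-21} together with the uniqueness theorem \cite[Theorem 4.4.6]{ABT}, and this should be said. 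Your key step --- that any solution of $(\bA-zI)x=\chi$ lies in $\Ker(\dA^*-zI)$ because $(T-zI)x_T\in\calH$ while the perturbation term is a multiple of a vector in $\calH_-\setminus\calH$ --- is the self-contained replacement for Proposition \ref{t-11}; it does require the true but not entirely free observation that no nonzero vector of $\sN_i\dot+\sN_{-i}$ belongs to $\dom(\dA)$, which is what keeps $\varphi\pm\psi$ outside of $\calH$. What your route buys: no functional model, no minimality or principal-part reduction, no bi-unitary equivalence, and the degenerate case disappears (the equality $s(z)=\kappa$ can occur only at eigenvalues of $T$, excluded by $z\in\rho(T)$). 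What the paper's route buys: the intermediate identities $V_{\Theta_0}=M_0$ in \eqref{e-55} and \eqref{e-vtheta}, which are then reused in Lemma \ref{l-20} and Theorems \ref{t-21}, \ref{t-22}, \ref{t-14}.

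One concrete warning about the sign issue you yourself flagged: it is real, and as literally written your formulas are inconsistent. Under Hypothesis \ref{setup} the basis with $g_+-g_-\in\dom(\hat A)$, $g_+-\kappa g_-\in\dom(T)$ corresponds, in the Section \ref{s3} parametrization, to the basis $\{g_+,-g_-\}$; hence if $\varphi,\psi$ are tied to $g_+,g_-$ themselves, the channel vector is $\chi=\sqrt{\tfrac{1-\kappa}{1+\kappa}}\,\tfrac1{\sqrt2}(\varphi-\psi)$ as in \eqref{e-57'}, not $\varphi+\psi$. Keeping your literal choices ($\chi\propto\varphi+\psi$ together with $(g_z,\psi)/(g_z,\varphi)=-s(z)$ for the hypothesis basis) the computation lands on $W_\Theta(z)=\tfrac{1+\kappa s(z)}{s(z)+\kappa}$, which is $1/S$ with $s$ replaced by $-s$. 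Tracking the single basis flip consistently --- either flip $\psi$ in $\chi$, or flip the sign of $s$ in the ratio, but not both --- restores $W_\Theta(z)=\tfrac{\kappa s(z)-1}{s(z)-\kappa}=1/S(z)$, which is the answer you state. So the plan is sound, and this is exactly the bookkeeping you anticipated; it is the one place where the argument, as drafted, would come out wrong.
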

\begin{proof} We are going to break the proof into three major steps.

\subsection*{Step 1.}

Let us consider the  model triple $(\dot \cB, \whBo, \cB)$ developed in Section \ref{s4} and described via formulas \eqref{nacha1}-\eqref{nacha3} with $\kappa=0$.
Let $\dB_0\in[\calH_+,\calH_-]$ be a  $(*)$-extension of $\whBo$ such that $\RE\dB_0\supset \cB=\cB^*$.  Clearly, $\whBo\in\Lambda(\dot \cB)$ and $\cB$ is the quasi-kernel of $\RE\dB_0$. It was shown in \cite[Theorem 4.4.6]{ABT} that $\dB_0$ exists and unique. We also note that by the construction of the model triple the von Neumann parameter $\calK=\kappa$ that parameterizes $\whBo$ via \eqref{e4-60}  equals  zero, i.e., $\calK=\kappa=0$. At the same time the parameter $U$ that parameterizes the quasi-kernel $\cB$ of $\RE\dB_0$ is equal to 1, i.e., $U=1$. Consequently, we can use the derivations of the end of Section \ref{s3} on $\dB_0$, use formulas \eqref{e-19}, \eqref{e-20} to conclude that
 \begin{equation}\label{e-48}
    \IM\dB_0=(\cdot,\chi_0)\chi_0, \quad \chi_0=\frac{1}{\sqrt2}\,\left(\varphi+\psi\right)\in\calH_-,
    \end{equation}
where $\varphi\in\calH_-$ and $\psi\in\calH_-$ are basis vectors in $\calR^{-1}(\sN_i)$ and $\calR^{-1}(\sN_{-i})$, respectively. Now we can construct (see \cite{ABT}) an L-system of the form
\begin{equation}\label{e-50}
\Theta_0= \begin{pmatrix} \dB_0&K_0&\ 1\cr \calH_+ \subset \calH \subset
\calH_-& &\dC\cr \end{pmatrix}
\end{equation}
where $K_0 c=c\cdot \chi_0$, $K_0^*f=(f,\chi_0)$, $(f\in\calH_+)$. The transfer function of this L-system can be written (see \eqref{e6-3-3}, \eqref{e-49} and \cite{ABT}) as
\begin{equation}\label{e-51}
    W_{\Theta_0}(z)=1-2i((\dB_0-z I)^{-1}\chi_0,\chi_0),\quad z\in\rho(\whBo),
\end{equation}
and the impedance function is\footnote{Here and below when we write $(\cB-z I)^{-1}\chi_0$ for $\chi_0\in\calH_-$ we mean that the resolvent $(\cB-z I)^{-1}$ is considered as extended to $\calH_-$ (see \cite{ABT}).}
\begin{equation}\label{e-52}
     V_{\Theta_0}(z)=((\RE\dB_0-z I)^{-1}\chi_0,\chi_0)=((\cB-z I)^{-1}\chi_0,\chi_0),\quad z\in\dC_\pm.
\end{equation}

At this point we  apply Proposition \ref{t-11} and obtain the following resolvent formula
\begin{equation}\label{e-49}
    (\whBo-z I)^{-1}=( \cB-z I)^{-1}-\frac{1}{M(\dot \cB,\cB)(z)-i}(\cdot,g_{\bar z})g_z,\quad z\in\rho(\whBo)\cap\dC_\pm,
\end{equation}
where $g_z=1/(t-z)$ and $M(\dot \cB,\cB)(z)$ is the Weyl-Titchmarsh function associated with the pair $(\dot \cB,\cB)$.
Moreover,
\begin{equation*}
    \begin{aligned}
    W_{\Theta_0}(z)&=1-2i((\dB_0-z I)^{-1}\chi_0,\chi_0)\\
    &=1-2i((\whBo-z I)^{-1}\chi_0,\chi_0)\\
    &=1-2i\left[(( \cB-z I)^{-1}\chi_0,\chi_0)-\left(\frac{1}{M(\dot \cB,\cB)(z)-i}(\chi_0,g_{\bar z})g_z,\chi_0\right)\right].
    \end{aligned}
\end{equation*}
Without loss of generality we can assume that
\begin{equation}\label{e-47-1}
g_z=(\cB-z I)^{-1}\chi_0=(\RE\dB_0-z I)^{-1}\chi_0=\frac{1}{t-z},\quad z\in\dC_\pm.
\end{equation}
Indeed, clearly $(\RE\dB_0-z I)^{-1}\chi_0\in\sN_z$, where $\sN_z$ is the deficiency subspace of $\dot \cB$,  and thus
$$
(\RE\dB_0-z I)^{-1}\chi_0=\frac{\xi}{t-z},\quad z\in\dC_\pm,
$$
for some $\xi\in\dC$. Let us show that $|\xi|=1$. For the impedance function $V_{\Theta_0}(z)$ in \eqref{e-52} we have
\begin{equation}\label{e-50-1}
    \begin{aligned}
    \IM V_{\Theta_0}(z)&=\frac{1}{2i}\left[((\RE\dB_0-z I)^{-1}\chi_0,\chi_0)-((\RE\dB_0-\bar z I)^{-1}\chi_0,\chi_0)\right]\\
    &=\frac{1}{2i}\left[(z-\bar z)( (\RE\dB_0-z I)^{-1}(\RE\dB_0-\bar z I)^{-1} \chi_0,\chi_0)\right] \\
    &=\IM z((\RE\dB_0-\bar z I)^{-1} \chi_0,(\RE\dB_0-\bar z I)^{-1} \chi_0 )\\
    &=\IM z\left(\frac{\xi}{t-\bar z},\frac{\xi}{t-\bar z}  \right)_{L^2(\bbR;d\mu)}=(\IM z) |\xi|^2\int_\bbR \frac{d\mu}{|t-z|^2}.
    \end{aligned}
\end{equation}

On the other hand, we know \cite{ABT} that $V_{\Theta_0}(z)$ is a Herglotz-Nevanlinna function that has integral representation
$$
V_{\Theta_0}(z)=Q+\int_\bbR \left (\frac{1}{t-z}-\frac{t}{1+t^2}\right )d\mu,\quad Q=\bar Q.
$$
Using the above representation, the property $\overline{V_{\Theta_0}}(z)=V_{\Theta_0}(\bar z)$, and straightforward calculations we find that
\begin{equation}\label{e-49-1}
   \IM V_{\Theta_0}(z)=(\IM z) \int_\bbR \frac{d\mu}{|t-z|^2}.
\end{equation}
Considering that $\int_\bbR \frac{d\mu}{|t-z|^2}>0$, we compare \eqref{e-50-1} with \eqref{e-49-1} and conclude that $|\xi|=1$. Since $|\xi|=1$, $\bar\xi$ can be scaled into $\chi_0$ and we obtain  \eqref{e-47-1}.

Taking into account \eqref{e-47-1}  and denoting $M_0=M(\dot \cB,\cB)(z)$ for the sake of simplicity, we continue
$$
\begin{aligned}
W_{\Theta_0}(z)&=1-2i\left(V_{\Theta_0}(z)-\frac{1}{M_0-i}V_{\Theta_0}^2(z)\right)\\
&=1-2i\left(i\frac{W_{\Theta_0}(z)-1}{W_{\Theta_0}(z)+1}+\frac{1}{M_0-i}\left(\frac{W_{\Theta_0}(z)-1}{W_{\Theta_0}(z)+1}\right)^2\right).
\end{aligned}
$$
Thus,
$$
W_{\Theta_0}(z)-1=2\frac{W_{\Theta_0}(z)-1}{W_{\Theta_0}(z)+1}-\frac{2i}{M_0-i}\left(\frac{W_{\Theta_0}(z)-1}{W_{\Theta_0}(z)+1}\right)^2,
$$
or
$$
1=\frac{2}{W_{\Theta_0}(z)+1}-\frac{2i}{M_0-i}\cdot\frac{W_{\Theta_0}(z)-1}{(W_{\Theta_0}(z)+1)^2}.
$$
Solving this equation for $W_{\Theta_0}(z)+1$ yields
\begin{equation}\label{e-53}
    W_{\Theta_0}(z)+1=\frac{(M_0-2i)\pm M_0}{M_0-i}.
\end{equation}
Assume that $M_0(z)\ne i$ for $z\in\dC_+$ and consider the  two outcomes for  formula \eqref{e-53}. First case leads to $W_{\Theta_0}(z)+1=2$ or $W_{\Theta_0}(z)=1$ which is impossible because it would lead (via \eqref{e6-3-6}) to $V_{\Theta_0}(z)=0$ that contradicts \eqref{e-49-1}. The second case is
$$
W_{\Theta_0}(z)+1=-\frac{2i}{M_0-i},
$$
leading to (see \eqref{blog})
$$
W_{\Theta_0}(z)=-\frac{2i}{M_0-i}-1=-\frac{M_0+i}{M_0-i}=-\frac{1}{s(z)},\quad z\in\dC_+\cap\rho(\whBo),
$$
where $s(z)$ is the Liv\v{s}ic function  associated with the pair $(\dot \cB, \cB)$. As we mentioned in Section \ref{s3}, in the case when $\kappa=0$ the characteristic function $S$
and the Liv\v{s}ic function $s$ coincide (up to a sign), or $S(z)=-s(z)$. Hence,
\begin{equation}\label{e-54}
    W_{\Theta_0}(z)=-\frac{1}{s(z)}=\frac{1}{S(z)},\quad z\in\dC_+\cap\rho(\whBo),
\end{equation}
where $S(z)$ is the characteristic function of the model triple $(\dot \cB, \whBo, \cB)$.

In the case when  $M_0(z)= i$ for all $z\in\dC_+$, formula \eqref{blog} would imply that $s(z)\equiv0$ in the upper half-plane. Then, as it was shown in \cite[Lemma 5.1]{MT-S}, all the points $z\in\dC_+$ are eigenvalues for $\whBo$ and the function $W_{\Theta_0}(z)$ is simply undefined in $\dC$ making \eqref{e-53} irrelevant.

As we mentioned  above, if $M_0(z)= i$ for all $z\in\dC_+$, the function $W_{\Theta_0}(z)$ is   ill-defined and \eqref{e-53} does not make sense  in $\dC_+$. One can, however, in this case re-write \eqref{e-53} in $\dC_-$. Using the symmetry of $M_0(z)$ we get that $M_0(z)= -i$ for all $z\in\dC_-$. Then \eqref{e-53} yields that $W_{\Theta_0}(z)=0$. On the other hand, \eqref{blog} extended to $\dC_-$ in this case implies that $s(z)=\infty$ for all $z\in\dC_-$ and hence \eqref{e-54} still formally holds true here for $z\in\dC_+\cap\rho(\whBo)$.

Let us also make one more observation. Using formulas \eqref{e6-3-6} and \eqref{e-54} yields
$$
W_{\Theta_0}(z)=\frac{1-iV_{\Theta_0}(z)}{1+iV_{\Theta_0}(z)}=-\frac{V_{\Theta_0}(z)+i}{V_{\Theta_0}(z)-i}=-\frac{M_0(z)+i}{M_0(z)-i},
$$
and hence
\begin{equation}\label{e-55}
    V_{\Theta_0}(z)=M_0(z),\quad z\in\dC_+.
\end{equation}

\subsection*{Step 2.}

Now we are ready to treat the case when $\kappa=\bar\kappa\ne0$.
Assume Hypothesis \ref{setup} and consider  the  model triple $(\dot \cB, \whB, \cB)$  described by formulas \eqref{nacha1}-\eqref{nacha3} with some  $\kappa$, $0\le\kappa<1$. Let $\dB\in[\calH_+,\calH_-]$ be a  $(*)$-extension of $\whB $ such that $\RE\dB\supset \cB=\cB^*$. Below we describe the construction of $\dB$. Equation \eqref{ddoomm14} of Hypothesis \ref{setup} implies that
$$
g_+- g_-\in \dom(\cB) \textrm{\quad or\quad }g_++ (-g_-)\in \dom(\cB),
$$
and
$$
g_+-\kappa g_-\in \dom (\whB )\textrm{\quad or\quad }g_++\kappa (-g_-)\in \dom (\whB ).
$$
Thus the von Neumann parameter $\calK$ that parameterizes $\whB $ via \eqref{e4-60} is $\kappa $ but the basis vector in $\sN_{-i}$ is $-g_-$. Consequently, $\calR^{-1}g_+=\varphi$ and $\calR^{-1}(-g_-)=-\psi$. Using \eqref{e-17} and \eqref{e-18} and replacing $\psi$ with $-\psi$, one obtains
\begin{equation}\label{e-57'}
       \IM\dB=(\cdot,\chi)\chi,\quad \chi=\sqrt{\frac{1-\kappa}{1+\kappa}}\left(\frac{1}{\sqrt2}\,\varphi- \frac{1}{\sqrt2}\,\psi\right).
    \end{equation}
We notice that if we followed the same basis pattern for the ($*$)-extension $\dB_0$ (when $\kappa=0$) then  \eqref{e-48} would become slightly modified as follows
\begin{equation}\label{e-58}
    \IM\dB_0=(\cdot,\chi_0)\chi_0, \quad \chi_0=\frac{1}{\sqrt2}\,\left(\varphi-\psi\right).
    \end{equation}
As before we use $\dB$ to construct  a model L-system of the form
\begin{equation}\label{e-59}
\Theta'= \begin{pmatrix} \dB&K'&\ 1\cr \calH_+ \subset \calH \subset
\calH_-& &\dC\cr \end{pmatrix},
\end{equation}
where $K' c=c\cdot \chi$, $K'^*f=(f,\chi)$, $(f\in\calH_+)$.

The impedance function of $\Theta'$   is
\begin{equation}\label{e-vtheta}
\begin{aligned}
& V_{\Theta'}(z)=((\RE\dB-z I)^{-1}\chi,\chi)=((\cB-z I)^{-1}\chi,\chi)\\
 &=\left((\cB-z I)^{-1}\sqrt{\frac{1-\kappa}{1+\kappa}}\left(\frac{1}{\sqrt2}\,\varphi- \frac{1}{\sqrt2}\,\psi\right),\sqrt{\frac{1-\kappa}{1+\kappa}}\left(\frac{1}{\sqrt2}\,\varphi- \frac{1}{\sqrt2}\,\psi\right)\right)\\
 &=\frac{1-\kappa}{1+\kappa}((\cB-z I)^{-1}\chi_0,\chi_0)=\frac{1-\kappa}{1+\kappa}\,V_{\Theta_0}(z)=\frac{1-\kappa}{1+\kappa}\,M_0(z),\quad z\in\dC_+.
 \end{aligned}
\end{equation}
Here we used relations \eqref{e-55} and \eqref{e-58}. On the other hand, using \eqref{ch1}, \eqref{e-54}, and \eqref{e-55} yields
$$
\begin{aligned}
 S(z)&=\frac{s(z)-\kappa} {{ \kappa }\,s(z)-1}=\frac{\frac{M_0-i}{M_0+i}-\kappa} {{ \kappa }\,\frac{M_0-i}{M_0+i}-1}=\frac{(1-\kappa)M_0-i(\kappa+1)}{(\kappa-1)M_0-(\kappa+1)i}\\
 &=-\frac{\frac{1-\kappa}{1+\kappa}M_0-i}{\frac{1-\kappa}{1+\kappa}M_0+i}=-\frac{V_{\Theta}(z)-i}{V_{\Theta}(z)+i}=\frac{1}{W_{\Theta}(z)}.
 \end{aligned}
$$
Thus,
\begin{equation}\label{e-60-prime}
    W_{\Theta'}(z)=\frac{1}{S(z)},\quad z\in\dC_+\cap\rho(\whB),
\end{equation}
where $S(z)$ is the characteristic function of the model triple $(\dot \cB, \whB, \cB)$.

\subsection*{Step 3.} Now we are ready to treat the general case.
Let
\begin{equation*}
\Theta= \begin{pmatrix} \bA&K&\ 1\cr \calH_+ \subset \calH \subset
\calH_-& &\dC\cr \end{pmatrix}
\end{equation*}
be an  L-system from the statement of our theorem.
Without loss of generality we can consider our L-system $\Theta$ to be minimal. If it is not minimal, we can use its so called ``principal part", which is an L-system that has the same transfer and impedance functions (see \cite[Section 6.6]{ABT}). We use the von Neumann parameter $\kappa$ of $T$ and the conditions of Hypothesis \ref{setup} to construct a model system $\Theta'$ given
by \eqref{e-59}. By construction $W_\Theta(z)=W_{\Theta'(z)}$ and the characteristic functions of $(\dA, T,\hat A)$ and the model triple $(\dot \cB, \whB, \cB)$ coincide. The conclusion of
the theorem then follows from Step 2 and formula \eqref{e-60-prime}.
\end{proof}

\begin{corollary} \label{c-26}
If under conditions of Theorem \ref{t-25} we also have that the von Neumann parameter  $\kappa$  of  $T$  equals zero, then $W_\Theta(z)=-1/s(z)$, where $s(z)$ is the Liv\v{s}ic function  associated with the pair $(\dA, \hat A)$.
\end{corollary}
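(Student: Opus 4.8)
The plan is to derive this statement as an immediate specialization of Theorem~\ref{t-25}. Since the hypotheses of Corollary~\ref{c-26} are exactly those of Theorem~\ref{t-25} supplemented by the single extra assumption $\kappa=0$, I would begin by invoking the identity \eqref{e-60}, which gives $W_\Theta(z)=1/S(z)$ for $z\in\dC_+\cap\rho(T)$, where $S(z)$ denotes the characteristic function of the triple $(\dA, T,\hat A)$.

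Next I would unwind the definition of $S(z)$. By \eqref{ch1}, $S(z)$ is the M\"obius transform $S(z)=\frac{s(z)-\kappa}{\overline{\kappa}\,s(z)-1}$ of the Liv\v{s}ic function $s=s(\dA,\hat A)$ associated with the pair $(\dA,\hat A)$, where the reference extension is $A=\hat A$ as fixed in the statement of Theorem~\ref{t-25}. Substituting $\kappa=0$ collapses this transform to $S(z)=-s(z)$.

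Combining the two displayed relations then yields $W_\Theta(z)=1/S(z)=-1/s(z)$, which is precisely the asserted identity.

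I do not anticipate any genuine obstacle here: the conclusion is a one-line consequence of Theorem~\ref{t-25} and the definition \eqref{ch1}. The only point worth verifying carefully is that the function $s(z)$ appearing in the statement is indeed the Liv\v{s}ic function attached to the reference pair $(\dA,\hat A)$ entering \eqref{ch1}, so that no intervening change of basis in the deficiency subspaces $\Ker(\dA^*\mp iI)$ alters it between the two formulas; this is guaranteed because Theorem~\ref{t-25} already stipulates $A=\hat A$ as the reference self-adjoint extension, and because when $\kappa=0$ the normalization \eqref{ddoomm14} makes the two conditions on the basis $g_\pm$ compatible.
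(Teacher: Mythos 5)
Your proposal is correct and matches the paper's intent exactly: the corollary is stated there without a separate proof precisely because it is the immediate specialization you describe, namely combining \eqref{e-60} of Theorem~\ref{t-25} with the observation (made in Section~\ref{s4} and reiterated in Step~1 of the theorem's proof) that the M\"obius transform \eqref{ch1} collapses to $S(z)=-s(z)$ when $\kappa=0$. Your added remark about the choice of basis $g_\pm$ being tied to the reference extension $A=\hat A$ is a sensible verification and consistent with Hypothesis~\ref{setup}.
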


\begin{corollary} \label{c-11}
Let $\Theta$ be an arbitrary  L-system of the form \eqref{e-62}. Then the transfer function of $W_\Theta(z)$ and the characteristic
function $S(z)$ of a triple $(\dA, T,\hat A_1)$ satisfying Hypothesis \ref{setup} with reference operator $A=\hat A_1$ are related via
\begin{equation}\label{e-56-arb}
    W_{\Theta}(z)=\frac{\nu}{S(z)},\quad z\in\dC_+\cap\rho(T),
\end{equation}
where $\nu\in\dC$ and $|\nu|=1$.
\end{corollary}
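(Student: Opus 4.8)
The statement to prove is Corollary \ref{c-11}: for an arbitrary L-system $\Theta$ of the form \eqref{e-62}, the transfer function $W_\Theta(z)$ and the characteristic function $S(z)$ of an associated triple $(\dA,T,\hat A_1)$ satisfying Hypothesis \ref{setup} are related by $W_\Theta(z)=\nu/S(z)$ with $|\nu|=1$. The difference from Theorem \ref{t-25} is that we are no longer assuming the reference operator coincides with the quasi-kernel in the specific normalized way guaranteed by \eqref{assa}; instead $\hat A_1$ is the quasi-kernel, but the deficiency basis $g_\pm$ realizing Hypothesis \ref{setup} may differ from the one naturally attached to $\Theta$ by a unimodular phase. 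So the plan is to reduce to Theorem \ref{t-25} and track precisely how the freedom in choosing $g_\pm$ propagates through $S(z)$.

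\textbf{Step 1.} First I would invoke Theorem \ref{t-25} itself in the situation where the deficiency elements are chosen canonically: given the quasi-kernel $\hat A_1$ of $\RE\bA$ and the main operator $T$, there is a choice of normalized basis $g_\pm^0$ for which Hypothesis \ref{setup} holds with reference operator $A=\hat A_1$ and for which $\kappa_0=S_0(i)$ per \eqref{assa}. For that canonical choice Theorem \ref{t-25} gives exactly $W_\Theta(z)=1/S_0(z)$, where $S_0$ is the characteristic function of $(\dA,T,\hat A_1)$ computed with the basis $g_\pm^0$. This is the clean case already handled.

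\textbf{Step 2.} Next I would analyze how $S(z)$ changes under a different admissible basis. Any other normalized basis satisfying the two conditions in \eqref{ddoomm14} for the same pair $(\dA,\hat A_1)$ and the same extension $T$ differs from $g_\pm^0$ by unimodular phases, say $g_+\mapsto e^{i\beta}g_+^0$ and $g_-\mapsto e^{i\gamma}g_-^0$. I would substitute this into the defining formula \eqref{charf12} for the Liv\v{s}ic function and into the M\"obius transform \eqref{ch1}. The Liv\v{s}ic function $s(z)=\frac{z-i}{z+i}\cdot\frac{(g_z,g_-)}{(g_z,g_+)}$ picks up the factor $e^{-i\gamma}/e^{-i\beta}=e^{i(\beta-\gamma)}$, so $s\mapsto e^{i(\beta-\gamma)}s_0$, and the von Neumann parameter $\kappa$ (which is $S(i)$) transforms correspondingly. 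Feeding this through \eqref{ch1} and simplifying, the upshot is that $S(z)$ changes only by a $z$-independent unimodular factor, i.e. $S(z)=\nu^{-1}S_0(z)$ for some $\nu$ with $|\nu|=1$. This is the computational heart of the argument.

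\textbf{Step 3 and the main obstacle.} Combining Steps 1 and 2 gives $W_\Theta(z)=1/S_0(z)=\nu/S(z)$, which is \eqref{e-56-arb}. The main obstacle I anticipate is \emph{Step 2}: one must verify that the phase freedom enters $S$ only multiplicatively and only by a constant, which requires care because the constraints in \eqref{ddoomm14} — namely $g_+-g_-\in\dom(\hat A_1)$ and $g_+-\kappa g_-\in\dom(T)$ — are not independent of the phases. In fact the membership conditions restrict the admissible relative phase $\beta-\gamma$, so the honest bookkeeping is to see which phases preserve \emph{both} domain conditions and then confirm that over that allowed family $S$ varies only by a constant unimodular factor. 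An alternative, perhaps cleaner route would bypass the basis analysis entirely: since $|W_\Theta(z)S(z)|$ and the argument of $W_\Theta(z)S(z)$ can be shown independent of $z$ directly from the realization formulas \eqref{e6-3-3} and \eqref{ch1} together with \eqref{e-60}, one concludes $W_\Theta(z)S(z)\equiv\nu$ with $|\nu|=1$ by analytic continuation. I would present the phase-tracking argument as primary and note this continuation argument as a sanity check.
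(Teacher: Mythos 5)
There is a genuine gap, and it sits in your Step 1. You read $\hat A_1$ as the quasi-kernel of $\RE\bA$ of the given system $\Theta$ and then invoke Theorem \ref{t-25} for $\Theta$ itself. But Theorem \ref{t-25} requires precisely that the main operator $T$ and the quasi-kernel $\hat A$ of $\RE\bA$ satisfy Hypothesis \ref{setup} (with the paper's standing normalization that $\kappa$ is real, $0\le\kappa<1$), and an arbitrary L-system of the form \eqref{e-62} need not have this property --- that is the whole reason the corollary carries the extra factor $\nu$, and the corollary's $\hat A_1$ is in general a \emph{different} self-adjoint extension than the quasi-kernel of $\RE\bA$. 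Your claim that one can always choose a basis $g_\pm^0$ making Hypothesis \ref{setup} hold with reference operator equal to the quasi-kernel conflates two different things: one can always arrange the two domain conditions \eqref{ddoomm14} with some complex $\kappa$, $|\kappa|<1$ (the remark following Hypothesis \ref{setup}), but Theorem \ref{t-25} is proved under the convention that $\kappa$ is real and nonnegative, and making $\kappa$ real by rotating $g_-$ destroys the condition $g_+-g_-\in\dom(\hat A)$, i.e., replaces the reference operator by a different extension. The obstruction is invariant: under $g_+\mapsto e^{i\beta}g_+$, $g_-\mapsto e^{i\gamma}g_-$, the von Neumann parameter of $T$ and the parameter $U$ in \eqref{DOMHAT} of $\hat A$ are multiplied by the \emph{same} factor $e^{i(\beta-\gamma)}$, so their relative phase cannot be adjusted by any choice of basis. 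For a generic $\Theta$ no admissible basis exists; and if your Step 1 were correct, the corollary would hold trivially with $\nu\equiv1$, which it does not.

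Relatedly, your Step 2 cannot produce the factor $\nu$. Once both conditions of Hypothesis \ref{setup} are imposed for one fixed triple $(\dA,T,\hat A_1)$, the remaining basis freedom is a common phase $g_\pm\mapsto e^{i\beta}g_\pm$ (your own observation that the membership conditions pin down the relative phase), and under a common phase the Liv\v{s}ic function \eqref{charf12}, the parameter $\kappa$, and hence $S(z)$ in \eqref{ch1} are all unchanged; the ``allowed family'' over which you hope $S$ varies by a unimodular constant consists of a single function. The unimodular constant in the corollary is not a basis artifact on the characteristic-function side; it measures the difference between two distinct L-systems sharing the same main operator. The paper's proof constructs an auxiliary L-system $\Theta_1$ with the same $T$ whose quasi-kernel $\hat A_1$ does satisfy Hypothesis \ref{setup}, invokes the theorem on the constant $J$-unitary factor (\cite[Theorem 8.2.1]{ABT}, \cite{ArTs03}) to get $W_\Theta(z)=\nu W_{\Theta_1}(z)$ with $|\nu|=1$, and only then applies Theorem \ref{t-25}, to $\Theta_1$. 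Your proposal contains no substitute for this comparison of two L-systems, and the ``alternative route'' you sketch --- that $|W_\Theta(z)S(z)|$ and $\arg\bigl(W_\Theta(z)S(z)\bigr)$ are $z$-independent ``directly from the realization formulas'' --- is exactly the assertion to be proved, not a verification of it.
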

\begin{proof}
The only difference between the L-system $\Theta$ here and the one described in Theorem \ref{t-25} is that the set of  conditions of Hypothesis \ref{setup} is satisfied for the latter. Moreover, there is an L-system $\Theta_1$ of the form \eqref{e-62} with the same main operator $T$ that complies with  Hypothesis \ref{setup}. Then according to the theorem about a constant $J$-unitary factor \cite[Theorem 8.2.1]{ABT}, \cite{ArTs03},   $W_{\Theta}(z)=\nu W_{\Theta_1}(z)$, where $\nu$ is a unimodular complex number. Applying Theorem \ref{t-25} to the L-system $\Theta_1$ yields $W_{\Theta_1}(z)=1/S(z)$, where $S(z)$ is the characteristic function of the triplet $(\dA, T,\hat A_1)$ and $\hat A_1$ is the quasi-kernel of the real part of the operator $\bA_1$ in $\Theta_1$.
Consequently,
$$
W_{\Theta}(z)=\nu W_{\Theta_1}(z)=\frac{\nu}{S(z)},$$
where $|\nu|=1$.
\end{proof}

\section{Impedance functions of the classes $\sM$ and $\sM_\kappa$}\label{s5}

We  say that an analytic function $V$ from $\bbC_+$ into itself belongs to the \textbf{generalized Donoghue class} $\sM_\kappa$, ($0\le\kappa<1$) if it admits the representation \eqref{hernev} with an  infinite Borel measure $\mu$ and
\begin{equation}\label{e-61-kappa}
\int_\bbR\frac{d\mu(\lambda)}{1+\lambda^2}=\frac{1-\kappa}{1+\kappa}\,,\quad\text{equivalently,}\quad V(i)=i\,\frac{1-\kappa}{1+\kappa}.
\end{equation}
Clearly, $\sM_0=\sM$.

We proceed by stating and proving the following important lemma.
\begin{lemma}\label{l-20}
Let $\Theta_\kappa$ of the form \eqref{e-62} be an  L-system whose main operator $T$ (with the von Neumann parameter $\kappa$, $0\le\kappa<1$) and the quasi-kernel $\hat A$ of $\RE\bA$ satisfy the conditions of Hypothesis \ref{setup} with the reference operator $A=\hat A$.
Then the impedance function $V_{\Theta_\kappa}(z)$ admits the representation
\begin{equation}\label{e-imp-m}
    V_{\Theta_\kappa}(z)=\frac{1-\kappa}{1+\kappa}\,V_{\Theta_0}(z),\quad z\in\dC_+,
\end{equation}
where $V_{\Theta_0}(z)$ is  the impedance function of an L-system $\Theta_0$ with the same set of conditions but with $\kappa_0=0$, where $\kappa_0$ is the von Neumann parameter of the main operator $T_0$ of $\Theta_0$.
\end{lemma}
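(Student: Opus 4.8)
The plan is to derive \eqref{e-imp-m} from a single inner-product computation, the whole argument resting on two structural facts about the $(*)$-extension parametrization of Section~\ref{s3}: that the channel vectors of $\Theta_\kappa$ and $\Theta_0$ are parallel, and that the two systems share one and the same real part. Since by the normalization adopted at the start of this section $\kappa$ is real with $0\le\kappa<1$, and since Hypothesis~\ref{setup} supplies the common reference operator $A=\hat A$ (corresponding to $U=1$ in the von Neumann parametrization \eqref{DOMHAT}) together with the common underlying symmetric operator $\dA$, the $(-)$-normalized deficiency elements $\varphi\in\calR^{-1}(\sN_i)$ and $\psi\in\calR^{-1}(\sN_{-i})$ are the same for both L-systems.

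First I would identify the channel vectors. Applying the end-of-Section~\ref{s3} computation \eqref{e-17}--\eqref{e-18} to $\Theta_\kappa$ and \eqref{e-19}--\eqref{e-20} to $\Theta_0$ (in the sign convention forced by Hypothesis~\ref{setup}, as in \eqref{e-57'}--\eqref{e-58}) gives $\IM\bA=(\cdot,\chi)\chi$ with
$$
\chi=\sqrt{\tfrac{1-\kappa}{1+\kappa}}\,\chi_0,
$$
where $\chi_0$ is the channel vector of $\Theta_0$; the two differ only by the real scalar $\sqrt{(1-\kappa)/(1+\kappa)}$. Next I would record the key point that makes the proportionality exact: once the reference operator $A=\hat A$, i.e. the parameter $U=1$, is fixed, the real part $\RE\bA$ does not depend on $\kappa$. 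This can be seen directly from \eqref{e-15} and \eqref{e-12-real}, where a short calculation shows that $\tfrac12(S_{\bA}+S_{\bA^*})$ carries no $\kappa$; equivalently it follows from the uniqueness in Proposition~\ref{t3-21}, the prescribed real part being determined by $U$ alone. Consequently $\Theta_\kappa$ and $\Theta_0$ possess the same quasi-kernel $\hat A$ and hence the same resolvent $(\RE\bA-zI)^{-1}$.

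Combining these with the definition \eqref{e6-3-5} of the impedance function and the bilinearity of the inner product then yields
$$
V_{\Theta_\kappa}(z)=\big((\RE\bA-zI)^{-1}\chi,\chi\big)=\tfrac{1-\kappa}{1+\kappa}\big((\RE\bA-zI)^{-1}\chi_0,\chi_0\big)=\tfrac{1-\kappa}{1+\kappa}\,V_{\Theta_0}(z),
$$
for $z\in\dC_+$, which is \eqref{e-imp-m} (the real scalar factoring out as its square). The one place needing genuine care---the main obstacle---is verifying that $\varphi,\psi$ and $\RE\bA$ may indeed be taken common to both systems; everything hinges on the $\kappa$-independence of $\RE\bA$ once the reference operator is fixed, after which the conclusion is immediate. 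In effect the lemma is the abstraction of the computation \eqref{e-vtheta} already carried out in the proof of Theorem~\ref{t-25}.
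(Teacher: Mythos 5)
Your proof is correct, and it reaches \eqref{e-imp-m} by a more direct route than the paper does. The paper's own proof transfers both $\Theta_\kappa$ and $\Theta_0$ to the model L-systems \eqref{e-59} and \eqref{e-50} built in Steps 1--2 of the proof of Theorem~\ref{t-25} (a reduction that rests on the functional-model machinery and the attendant minimality considerations), and then simply cites the model computation \eqref{e-vtheta} together with \eqref{e-55}. You instead stay in the original space and observe that, once the deficiency basis of Hypothesis~\ref{setup} and the parameter $U=1$ are fixed, (i) the channel vectors are parallel, $\chi=\sqrt{\tfrac{1-\kappa}{1+\kappa}}\,\chi_0$, by \eqref{e-17}--\eqref{e-20} in the sign convention of \eqref{e-57'}--\eqref{e-58}, and (ii) the matrix $\tfrac12(S_{\bA}+S_{\bA^*})$ computed from \eqref{e-15} equals $\tfrac{i}{2}\left(\begin{smallmatrix}-1&1\\-1&1\end{smallmatrix}\right)$ for every $\kappa$, so that $\Theta_\kappa$ and $\Theta_0$ share literally the same operator $\RE\bA$; the factor $\tfrac{1-\kappa}{1+\kappa}$ then falls out of the defining formula \eqref{e6-3-5} as the square of a real scalar. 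The mathematical core is the same inner-product computation as \eqref{e-vtheta} (as you yourself note), but your version needs neither the passage to the model nor bi-unitary equivalence, which is a genuine economy; the paper's version buys brevity by reusing work already done inside Theorem~\ref{t-25}.

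Two caveats, neither fatal. First, your parenthetical claim that the $\kappa$-independence of $\RE\bA$ ``equivalently follows from the uniqueness in Proposition~\ref{t3-21}'' does not hold up on its own: that uniqueness is asserted for a \emph{fixed} $T$ (fixed $\calK$) and fixed quasi-kernel, and by itself does not preclude $(*)$-extensions of \emph{different} operators $T$, $T_0$ with the same quasi-kernel from having different real parts --- it is precisely your matrix computation from \eqref{e-15} and \eqref{e-12-real} that rules this out, so keep that computation as the actual justification. Second, like the paper's proof, your argument requires reading the lemma so that $\Theta_0$ is built over the same pair $(\dA,\hat A)$ (and the same deficiency basis) as $\Theta_\kappa$; you make this assumption explicit, which is in fact where the statement needs it, since \eqref{e-imp-m} fails for an unrelated $\Theta_0$ whose Weyl-Titchmarsh function differs.
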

\begin{proof}
 Once again we rely on our derivations above. We use the von Neumann parameter $\kappa$ of $T$ and the conditions of Hypothesis \ref{setup} to construct a model system $\Theta'$ given
by \eqref{e-59}. By construction $V_{\Theta_\kappa}(z)=V_{\Theta'}(z)$. Similarly, the impedance function $V_{\Theta_0}(z)$ coincides with the impedance function of a model system \eqref{e-50}.  The conclusion of the lemma then follows from \eqref{e-55} and \eqref{e-vtheta}.
\end{proof}

\begin{theorem}\label{t-21}
Let $\Theta$ of the form \eqref{e-62} be an L-system whose main operator $T$ has the von Neumann parameter $\kappa$, $0\le\kappa<1$. Then its impedance function $V_\Theta(z)$ belongs to the Donoghue class $\sM$ if and only if $\kappa=0$.
\end{theorem}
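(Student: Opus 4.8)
The plan is to read the statement off the factorization in \lemref{l-20} by testing the impedance function at the single point $z=i$. Recall first that the impedance function $V_\Theta$ of an L-system is a Herglotz--Nevanlinna function, and that by the definition \eqref{hernev}--\eqref{e-32-norm-m} membership in the Donoghue class $\sM$ is equivalent to two requirements: the integral representation carries no constant (linear) term, and the normalization $\int_\bbR(1+\lambda^2)^{-1}d\mu=1$ holds, the latter being the same as $V_\Theta(i)=i$.

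To apply \lemref{l-20} I would first note that, as explained after Hypothesis~\ref{setup}, a deficiency basis $g_\pm$ can be chosen so that the triple $(\dA,T,\hat A)$ with reference operator $A=\hat A$ satisfies Hypothesis~\ref{setup} and realizes the prescribed von Neumann parameter $\kappa$. The lemma then gives
$$
V_\Theta(z)=\frac{1-\kappa}{1+\kappa}\,V_{\Theta_0}(z),\quad z\in\dC_+,
$$
where $V_{\Theta_0}$ is the impedance function of the companion L-system with vanishing parameter. Next I would recall from Step~1 of the proof of \thmref{t-25}, namely identity \eqref{e-55}, that $V_{\Theta_0}(z)=M_0(z)$ is precisely the Weyl--Titchmarsh function $M(\dot\cB,\cB)$ of the model pair; hence $V_{\Theta_0}\in\sM$, so it carries no constant term and $V_{\Theta_0}(i)=i$.

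Combining these, $V_\Theta=\tfrac{1-\kappa}{1+\kappa}M_0$ is a positive scalar multiple of a function in $\sM$, so its representation is again of Donoghue type (the measure is merely scaled by the positive factor $\tfrac{1-\kappa}{1+\kappa}$, and no constant term appears). Thus the only obstruction to $V_\Theta\in\sM$ is the normalization, which I test at $z=i$: since $V_\Theta(i)=\tfrac{1-\kappa}{1+\kappa}\,i$, we have $V_\Theta(i)=i$ if and only if $\tfrac{1-\kappa}{1+\kappa}=1$, that is, if and only if $\kappa=0$. This yields both implications simultaneously.

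The proof is short once \lemref{l-20} and the identification \eqref{e-55} are available, so the substance sits in those preparatory results rather than in \thmref{t-21}. The only step deserving care is the remark that multiplying by the positive constant $\tfrac{1-\kappa}{1+\kappa}$ preserves the Donoghue representation form, so that $V_\Theta(i)=i$ is genuinely the sole discriminating condition; the same computation shows in passing that $V_\Theta$ always lies in the generalized class $\sM_\kappa$, which anticipates \thmref{t-22}.
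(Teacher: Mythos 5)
There is a genuine gap, and it sits exactly where you placed the ``substance'' of the argument: in the claim that a deficiency basis $g_\pm$ can always be chosen so that the triple $(\dA,T,\hat A)$ of the \emph{given} L-system satisfies Hypothesis~\ref{setup} with the prescribed real parameter $\kappa$, so that \lemref{l-20} applies. This is false in general. The remark after Hypothesis~\ref{setup} only says that a basis exists with $g_+-g_-\in\dom(\hat A)$ and $g_+-\kappa g_-\in\dom(T)$, where then $\kappa=S(\dA,T,\hat A)(i)$ as in \eqref{assa} --- a complex number. Once you impose the condition $g_+-g_-\in\dom(\hat A)$, the relative phase between $g_+$ and $g_-$ is pinned down by the unimodular parameter $U$ of the quasi-kernel in \eqref{DOMHAT}, and no rotation freedom remains to make the parameter of $T$ real and nonnegative: relative to such a basis it equals a fixed complex number of modulus $\kappa$, which coincides with $\kappa$ only when $U$ is favorable (the case $\beta=0$ in the paper's family \eqref{e-62-beta-kappa}). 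Indeed, the paper opens its proof of \thmref{t-21} by noting that the quasi-kernel of $\RE\bA$ ``does not necessarily satisfy the conditions of Hypothesis~\ref{setup}''; and if your claim were true, the ``only if'' part of \thmref{t-22} would be vacuous, since every minimal L-system would then satisfy Hypothesis~\ref{setup} after a basis change. The Remark following Example~2 gives a concrete counterexample to your factorization: the L-system $\Theta_1$ there has the same main operator $T$ with $\kappa=e^{-\ell}\ne 0$, yet $V_{\Theta_1}(z)=i\frac{e^{-i\ell z}+1}{e^{-i\ell z}-1}$, so $V_{\Theta_1}(i)=i\frac{1+\kappa}{1-\kappa}$ and \eqref{e-imp-m} fails for it.

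Because of this, both implications in your argument are incomplete. For the ``only if'' direction you must rule out the possibility that some \emph{non-compliant} L-system with $\kappa\ne 0$ has impedance in $\sM$; the paper does this by invoking the constant $J$-unitary factor theorem to write $W_\Theta(z)=\nu W_{\Theta_\kappa}(z)$ with $|\nu|=1$ and a compliant $\Theta_\kappa$, then showing via the integral representation (the constants $Q$ and $L$) that $Q=0$ and $L=1$ force $\kappa=0$ even in the presence of $\nu$. For the ``if'' direction, the case $\kappa=0$ also requires handling an arbitrary, possibly non-compliant system: the paper computes, in \eqref{e-71-newww}, that the factor $\nu$ washes out at $z=\pm i$, so $V_\Theta(i)=i$ holds for every $\nu$. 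That computation is precisely what your proposal omits; your test at $z=i$ is valid only on the Hypothesis~\ref{setup}-compliant slice where \lemref{l-20} genuinely applies.
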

\begin{proof}
First of all, we note that in our system $\Theta$ the quasi-kernel $\hat A$ of $\RE\bA$ does not necessarily satisfy the conditions of Hypothesis \ref{setup}. However, if $\Theta_\kappa$ is a system from the statement of Lemma \ref{l-20} with the same $\kappa$ and Hypothesis \ref{setup} requirements, then
\begin{equation}\label{e-58-nu}
W_\Theta(z)=\nu W_{\Theta_\kappa}(z),
\end{equation}
where $\nu$ is a complex number such that $|\nu|=1$. This follows from the theorem about a constant $J$-unitary factor \cite[Theorem 8.2.1]{ABT}, \cite{ArTs03}.

To prove the Theorem in one direction we assume that $V_\Theta(z)\in\sM$ and $\kappa\ne0$.
We know that Theorem \ref{t-25} applies to the L-system $\Theta_\kappa$  and hence formula \eqref{e-60} takes place.
Combining \eqref{e-60} with \eqref{e-58-nu} and using the normalization condition \eqref{assa} we obtain
\begin{equation}\label{e-59-nu}
    W_\Theta(i)=\frac{\nu}{\kappa}.
\end{equation}
We also know that according to \cite[Theorem 6.4.3]{ABT} the impedance function $V_\Theta(z)$ admits the following integral representation
\begin{equation}\label{e-60-nu}
V_\Theta(z)=Q+\int_\bbR \left(\frac{1}{\lambda-z}-\frac{\lambda}{1+\lambda^2}\right)d\mu,
\end{equation}
where $Q$ is a real number and $\mu$ is an  infinite Borel measure   such that
$$
\int_\bbR\frac{d\mu(\lambda)}{1+\lambda^2}=L<\infty.
$$
It follows directly from \eqref{e-60-nu} that $V_\Theta(i)=Q+iL$. Therefore, applying \eqref{e6-3-6} directly to $W_\Theta(z)$ and using \eqref{e-59-nu} yields
$$
\begin{aligned}
W_\Theta(i)&=\frac{1-i V_\Theta(i)}{1+i V_\Theta(i)}=\frac{1-i(Q+iL)}{1+i(Q+iL)}=\frac{1+L-iQ}{1-L+iQ}=\frac{\nu}{\kappa}.
\end{aligned}
$$
Cross multiplying yields
\begin{equation}\label{e-67-bad}
\kappa+\kappa L-i\kappa Q=\nu-\nu L+i\nu Q.
\end{equation}
Solving this relation for $Q$ gives us
\begin{equation}\label{e-61-nu}
    Q=i\frac{\nu(1-L)-\kappa(1+L)}{\nu+\kappa}.
\end{equation}
Taking into account that  $\nu\bar \nu=1$ and recalling our agreement in Section \ref{s3} to consider real $\kappa$ only, we get
\begin{equation}\label{e-63-nu}
\bar Q=-i\frac{\bar\nu(1-L)-\kappa(1+L)}{\bar \nu+\kappa}=-i\frac{(1-L)-\kappa\nu(1+L)}{1+ \nu\kappa}.
\end{equation}
But $Q=\bar Q$ and hence equating \eqref{e-61-nu} and \eqref{e-63-nu} and solving for $L$ yields
\begin{equation}\label{e-62-nu}
    L=\frac{\nu-\kappa^2\nu}{(\nu+\kappa)(1+\kappa\nu)}.
\end{equation}
Clearly,  $V_\Theta(z)\in\sM$ if and only if $Q=0$ and $L=1$. Setting the right hand side of \eqref{e-62-nu} to $1$ and solving for $\kappa$ gives $\kappa=0$ or $\kappa=-(\nu^2+1)/(2\nu)$, but only $\kappa=0$ makes $Q=0$ in \eqref{e-61-nu}. Consequently, our assumption that $\kappa\ne0$  leads to a  contradiction. Therefore, $V_\Theta(z)\in\sM$ implies $\kappa=0$.

In order to prove the converse we assume that $\kappa=0$. Let $\Theta_0$ be the L-system $\Theta_\kappa$ described in the beginning of the proof with $\kappa=0$. Let also $\hat A_0$ be the reference operator in $\Theta_0$ that is the quasi-kernel of the real part of the state-space operator in $\Theta_0$. Then the fact that $S(\dA,T,\hat A_0)(z)=-s(\dA,\hat A_0)(z)$ for $\kappa=0$ (see Section \ref{s4}) and \eqref{blog} yield
\begin{equation*}
W_\Theta(z)=\nu W_{\Theta_0}(z)= \frac{\nu}{S(\dA,\hat A_0)(z)}= -\frac{\nu}{s(\dA,\hat A_0)(z)}= \frac{\nu(M(\dA,\hat A_0)(z)+i)}{i-M(\dA,\hat A_0)(z)}.
\end{equation*}
Moreover, applying \eqref{e6-3-6} to the above formula for $W_\Theta(z)$ we obtain
\begin{equation}\label{e-71-newww}
   V_\Theta(z)=i \frac{W_\Theta (z) - 1}{W_\Theta (z) + 1}=i \frac{\frac{\nu(M(\dA,\hat A_0)(z)+i)}{i-M(\dA,\hat A_0)(z)} - 1}{\frac{\nu(M(\dA,\hat A_0)(z)+i)}{i-M(\dA,\hat A_0)(z)} + 1}
   =i\frac{(1+\bar\nu)M(\dA,\hat A_0)(z)+(1-\bar\nu)i}{(1-\bar\nu)M(\dA,\hat A_0)(z)+(1+\bar\nu)i}.
\end{equation}
Substituting $z=-i$ to \eqref{e-71-newww} yields $V_\Theta(-i)=-i$ and thus, by symmetry property of $V_\Theta(z)$,  we have that $V_\Theta(i)=i$ and hence $V_\Theta(z)\in\sM$.



\end{proof}
Consider the L-system $\Theta$ of the form \eqref{e-62} that was used in the statement of Theorem \ref{t-21}. This L-system does not necessarily comply with the conditions of Hypothesis  \ref{setup} and hence the quasi-kernel $\hat A$ of $\RE\bA$ is parameterized via \eqref{DOMHAT} by some complex number  $U$, $|U|=1$. Then $U=e^{2i\beta}$, where $\beta\in[0,\pi)$. This representation allows us to introduce a one-parametric family of L-systems $\Theta_0(\beta)$ that all have $\kappa=0$. That is,
\begin{equation}\label{e-62-beta}
\Theta_0(\beta)= \begin{pmatrix} \bA_0(\beta)&K_0(\beta)&\ 1\cr \calH_+ \subset \calH \subset
\calH_-& &\dC\cr \end{pmatrix}.
\end{equation}
We note that $\Theta_0(\beta)$ satisfies the conditions of Hypothesis  \ref{setup} only for the case when $\beta=0$. Hence, the L-system $\Theta_0$ from Lemma \ref{l-20} can be written as $\Theta_0=\Theta_0(0)$ using \eqref{e-62-beta}.  Moreover, it directly follows from Theorem \ref{t-21} that all the impedance functions $V_{\Theta_0(\beta)}(z)$ belong to the Donoghue class $\sM$ regardless of the value of $\beta\in[0,\pi)$.

The next theorem gives criteria on when the impedance function of an L-system belongs to the generalized Donoghue class $\sM_\kappa$.

\begin{theorem}\label{t-22}
Let $\Theta_\kappa$, $0<\kappa<1$, of the form \eqref{e-62} be a minimal L-system with the main operator $T$ and the impedance function $V_{\Theta_\kappa}(z)$ which is not an identical constant in $\dC_+$.
Then  $V_{\Theta_\kappa}(z)$ belongs to the generalized Donoghue class $\sM_\kappa$ and \eqref{e-imp-m} holds if and only if  the triple $(\dot A, T, \hat A)$ satisfies Hypothesis \ref{setup} with $A=\hat A$, the quasi-kernel of $\RE\bA$.
\end{theorem}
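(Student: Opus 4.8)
The plan is to prove the two implications separately, using \lemref{l-20} for the sufficiency direction and the constant $J$-unitary factor theorem together with bi-unitary equivalence for the harder necessity direction. For sufficiency, suppose the triple $(\dA,T,\hat A)$ satisfies Hypothesis~\ref{setup} with $A=\hat A$. Then \lemref{l-20} applies verbatim and yields the factorization \eqref{e-imp-m}, $V_{\Theta_\kappa}(z)=\frac{1-\kappa}{1+\kappa}V_{\Theta_0}(z)$, with $\Theta_0$ a companion L-system satisfying the same conditions but with vanishing parameter $\kappa_0=0$. Since $\kappa_0=0$, \thmref{t-21} gives $V_{\Theta_0}\in\sM$, so $V_{\Theta_0}$ has the representation \eqref{hernev} with an infinite measure normalized by $V_{\Theta_0}(i)=i$. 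Multiplying by the positive constant $\frac{1-\kappa}{1+\kappa}$ only rescales the (still infinite) measure and introduces no free term, so $V_{\Theta_\kappa}$ retains the form \eqref{hernev} and satisfies $V_{\Theta_\kappa}(i)=i\frac{1-\kappa}{1+\kappa}$, which is exactly the normalization \eqref{e-61-kappa} defining $\sM_\kappa$. Thus $V_{\Theta_\kappa}\in\sM_\kappa$ and \eqref{e-imp-m} hold.

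For necessity, assume $V_{\Theta_\kappa}\in\sM_\kappa$. First I would construct a model L-system $\Theta_\kappa^H$ sharing the same main operator $T$ (hence the same real von Neumann parameter $\kappa$) that does comply with Hypothesis~\ref{setup}, its reference operator $\hat A_1$ being the quasi-kernel of the real part of its state-space operator; such a system is available from the model construction underlying \thmref{t-25} and Corollary~\ref{c-11}. Because the two systems share the main operator $T$, the theorem on a constant $J$-unitary factor \cite[Theorem 8.2.1]{ABT}, \cite{ArTs03} gives $W_{\Theta_\kappa}(z)=\nu\,W_{\Theta_\kappa^H}(z)$ with $|\nu|=1$. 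Now \thmref{t-25} yields $W_{\Theta_\kappa^H}=1/S$, and the normalization \eqref{assa} gives $S(i)=\kappa$, so $W_{\Theta_\kappa^H}(i)=1/\kappa$; meanwhile the membership $V_{\Theta_\kappa}\in\sM_\kappa$ forces $V_{\Theta_\kappa}(i)=i\frac{1-\kappa}{1+\kappa}$, and feeding this into \eqref{e6-3-6} gives
\[
W_{\Theta_\kappa}(i)=\frac{1-iV_{\Theta_\kappa}(i)}{1+iV_{\Theta_\kappa}(i)}=\frac{1}{\kappa}.
\]
Comparing the two expressions through $W_{\Theta_\kappa}(i)=\nu/\kappa$ forces $\nu=1$, whence $W_{\Theta_\kappa}\equiv W_{\Theta_\kappa^H}$.

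To finish, I would invoke minimality: since both $\Theta_\kappa$ and $\Theta_\kappa^H$ are minimal and have identical transfer functions, \cite[Theorem 6.6.10]{ABT} shows they are bi-unitarily equivalent. The equivalence \eqref{167} transports the state-space operator, its real part and quasi-kernel, as well as $\dA$ and its deficiency subspaces, from $\Theta_\kappa^H$ to $\Theta_\kappa$ while preserving all domains; hence the deficiency-basis conditions of Hypothesis~\ref{setup}, valid for $\Theta_\kappa^H$ with $A=\hat A_1$, transfer to $\Theta_\kappa$ with $A=\hat A$, which is the desired conclusion (and \eqref{e-imp-m} then follows once more from \lemref{l-20}). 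The main obstacle is precisely this necessity direction: one must ensure that the companion $\Theta_\kappa^H$ genuinely carries the same $T$ and the same real $\kappa$ (so that $S(i)=\kappa$ is legitimate and the computation of $\nu$ is valid), and then transfer the basis normalizations of Hypothesis~\ref{setup} across the bi-unitary equivalence, a step where minimality and the non-constancy of $V_{\Theta_\kappa}$ (guaranteeing non-degeneracy of the associated Herglotz representation) are indispensable.
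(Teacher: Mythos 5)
Your proposal is correct, and up to the last step it is the paper's own argument: sufficiency via \lemref{l-20} plus rescaling of the Herglotz representation, and, for necessity, the companion Hypothesis-compliant system with the same main operator $T$, the constant $J$-unitary factor theorem, the evaluation at $z=i$ (legitimate because $i\in\rho(T)$ when $0<\kappa<1$) giving $\nu=1$ from $W_{\Theta_\kappa}(i)=1/\kappa=W_{\Theta_\kappa^H}(i)$, and then bi-unitary equivalence via minimality and \cite[Theorem 6.6.10]{ABT}. Where you genuinely diverge is the finish. The paper passes from bi-unitary equivalence to unitary equivalence of the pairs $(\dA,\hat A)$ and $(\dA,\hat A')$, hence to equality of their Weyl--Titchmarsh functions, and then invokes \eqref{transm}: equality forces either $\alpha=0$ (so $\hat A=\hat A'$ and Hypothesis \ref{setup} holds) or the degenerate alternative $M(\dA,\hat A)\equiv i$, which produces a constant impedance and is excluded by the non-constancy assumption. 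You instead transport the Hypothesis data directly through the equivalence. That route does work, but the facts carrying it must be made explicit, since they are exactly what your write-up leaves implicit: because both systems share $T$, \eqref{167} gives $UT=TU$, hence $U$ commutes with $\dA$ and $\dA^*$ and maps each $\sN_{\pm i}$ onto itself; taking adjoints in $U_-\bA=\bA'U_+$ (using $U_-=(U_+^*)^{-1}$) gives $U_-\RE\bA=\RE\bA'\,U_+$, hence $U\hat A U^{-1}=\hat A'$; pulling back the normalized deficiency basis that realizes Hypothesis \ref{setup} for $(\dA,T,\hat A')$ then realizes it for $(\dA,T,\hat A)$ with the same $\kappa$, since $U^{-1}$ preserves norms, $\dom(T)$, and carries $\dom(\hat A')$ onto $\dom(\hat A)$. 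This is sharper than the paper's finish: the Weyl-function comparison forgets that the \emph{same} unitary intertwines $T$, which is precisely the information that collapses the paper's degenerate case.

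The one real blemish is your closing claim that non-constancy of $V_{\Theta_\kappa}$ is ``indispensable'' to your argument: it is not used anywhere in it. Non-constancy is needed only on the paper's route, to rule out $M(\dA,\hat A)\equiv i$ in \eqref{transm}; your transport argument bypasses that alternative entirely (and, taken at face value, would prove the statement without that hypothesis). So either spell out the commutation and intertwining facts above and acknowledge that your proof does not invoke non-constancy, or switch to the paper's Weyl-function comparison, where that hypothesis genuinely earns its place.
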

\begin{proof}
We prove the necessity first. Suppose the triple $(\dot A, T, \hat A)$  in $\Theta$ satisfies the conditions of  Hypothesis \ref{setup}. Then, according to Lemma \ref{l-20}, formula \eqref{e-imp-m} holds and consequently $V_{\Theta_\kappa}(z)$ belongs to the generalized Donoghue class $\sM_\kappa$.

In order to prove the Theorem in the other direction we assume that $V_{\Theta_\kappa}(z)\in\sM_\kappa$ satisfies  equation \eqref{e-imp-m} for some L-system $\Theta_0$. Then according to Theorem \ref{t-21} $V_{\Theta_0}(z)$ belongs to the Donoghue class $\sM$. Clearly then \eqref{e-imp-m} implies that $V_{\Theta_\kappa}(z)$ has $Q=0$ in its integral representation \eqref{e-61-nu}. Moreover,
$$
V_{\Theta_\kappa}(i)=\frac{1-\kappa}{1+\kappa}\,V_{\Theta_0}(i)=i\frac{1-\kappa}{1+\kappa}\,=i\int_\bbR\frac{d\mu(\lambda)}{1+\lambda^2},
$$
where $\mu(\lambda)$ is  the measure from the integral representation \eqref{e-61-nu} of $V_{\Theta_\kappa}(z)$. Thus,
$$
L=\int_\bbR\frac{d\mu(\lambda)}{1+\lambda^2}=\frac{1-\kappa}{1+\kappa}.
$$
Assume the contrary, i.e., suppose that the quasi-kernel $\hat A$ of $\RE\bA$ of $\Theta_\kappa$ does not satisfy the conditions of Hypothesis \ref{setup}. Then, consider another  L-system $\Theta'$ of the form \eqref{e-62} which is only different from $\Theta$ by that its quasi-kernel $\hat A'$ of $\RE\bA'$ satisfies the conditions of Hypothesis \ref{setup} for the same value of $\kappa$. Applying the theorem about a constant $J$-unitary factor \cite[Theorem 8.2.1]{ABT} then yields
\begin{equation*}\label{e-58-nu''}
W_{\Theta_\kappa}(z)=\nu W_{\Theta'}(z),
\end{equation*}
where $\nu$ is a complex number such that $|\nu|=1$. Our goal is to show that $\nu=1$. Since we know the values of $Q$ and $L$ in the integral representation \eqref{e-61-nu} of $V_{\Theta_\kappa}(z)$, we can use this information to find $\nu$ from \eqref{e-61-nu}. We have then
$$
0=i\frac{\nu(1-L)-\kappa(1+L)}{\nu+\kappa}, \quad\textrm{ where }\quad L=\frac{1-\kappa}{1+\kappa}.
$$
Consequently, $\nu(1-L)-\kappa(1+L)=0$ or
$$
\nu=\kappa\,\frac{1+L}{1-L}=\kappa\,\frac{1+\frac{1-\kappa}{1+\kappa}}{1-\frac{1-\kappa}{1+\kappa}}=\kappa\cdot\frac{2}{2\kappa}=1.
$$
Thus, $\nu=1$ and hence
\begin{equation}\label{e-64-nu}
W_{\Theta_\kappa}(z)= W_{\Theta'}(z).
\end{equation}
Our L-system $\Theta_\kappa$ is minimal and hence we can apply the Theorem on bi-unitary equivalence \cite[Theorem 6.6.10]{ABT} for L-systems $\Theta_\kappa$ and $\Theta'$ and obtain that the pairs $(\dA, \hat A)$ and $(\dA, \hat A')$ are unitarily equivalent. Consequently, the  Weyl-Titchmarsh functions $M(\dA,\hat A)$ and $M(\dA,\hat A' )$ coincide. At the same time, both $\hat A$ and $\hat A'$  are self-adjoint extensions of the symmetric operator $\dA$ giving us the following relation between $M(\dA,\hat A)$ and $M(\dA,\hat A' )$ (see \cite[Subsection 2.2]{MT10})
\begin{equation}\label{transm}
M(\dA, \hat A)=\frac{\cos \alpha \, M(\dot A, \hat A')-\sin \alpha}{
\cos\alpha +\sin \alpha \,M(\dot A, \hat A')},
\quad\textrm{ for some }\alpha \in [0,\pi).
\end{equation}
  Using  $M(\dA, \hat A')(z)=M(\dA, \hat A)(z)$ for $z\in\dC_+$ on \eqref{transm} and solving for $M(\dot A, \hat A)(z)$ gives us that either $\alpha=0$ or $M(\dot A, \hat A)(z)=i$ for all $z\in\dC_+$. The former case of $\alpha=0$ gives $\hat A=\hat A'$, and thus $\hat A$  satisfies the conditions of Hypothesis \ref{setup} which contradicts our assumption. The latter case  would imply (via \eqref{blog}) that $s(z)=s(\dA,\hat A)(z)\equiv0$ and consequently $S(z)=S(\dA,\hat A,T)(z)\equiv \kappa$ in the upper half-plane. Then \eqref{e-60} and \eqref{e-56-arb} yield $W_{\Theta_\kappa}(z)=\theta/\kappa$ for some $\theta$ such that $|\theta|=1$ and hence
\begin{equation}\label{e-72-const}
V_{\Theta_\kappa}(z)=i\frac{\theta/\kappa-1}{\theta/\kappa+1}=i\frac{\theta-\kappa}{\theta+\kappa},\quad z\in\dC_+.
\end{equation}
Thus, in particular,
$$
V_{\Theta_\kappa}(i)=i\frac{\theta-\kappa}{\theta+\kappa}.
$$
On the other hand, we know that $V_{\Theta_\kappa}(z)$ satisfies  equation \eqref{e-imp-m} and hence (taking into account that  $V_{\Theta_0}(i)=i$),  plugging $z=i$ in \eqref{e-imp-m} gives
$$
V_{\Theta_\kappa}(i)=i\frac{1-\kappa}{1+\kappa}.
$$
Combining the two equations above we get  $\theta=1$. Therefore, \eqref{e-72-const} yields
 \begin{equation}\label{e-73-id-const}
V_{\Theta_\kappa}(z)=i\frac{1-\kappa}{1+\kappa},\quad z\in\dC_+,
\end{equation}
 which brings us back to  a contradiction with a condition of the Theorem that $V_{\Theta_\kappa}(z)$  is not an identical constant. Consequently, $\alpha=0$ is the only feasible choice and hence $\hat A=\hat A'$ implying that $\hat A$  satisfies the conditions of Hypothesis \ref{setup}.
\end{proof}
\begin{remark}
Let us consider the case when the condition of $V_{\Theta_\kappa}(z)$ not being an identical constant in $\dC_+$ is omitted in the statement of Theorem \ref{t-22}. Then, as we have shown in the proof of the theorem, $V_{\Theta_\kappa}(z)$ may take a form \eqref{e-73-id-const}. We will show that in this case the L-system $\Theta$ from the statement of Theorem \ref{t-22} is bi-unitarily equivalent to an L-system $\Theta'$ that satisfies the conditions of Hypothesis \ref{setup}.

Let $V_{\Theta_\kappa}(z)$ from Theorem \ref{t-22} takes a form \eqref{e-73-id-const}. Let also $\mu(\lambda)$ be a Borel measure on $\Bbb R$ given by the simple formula
\begin{equation}\label{e-75-mu}
    \mu(\lambda)=\frac{\lambda}{\pi},\quad \lambda\in\Bbb R,
\end{equation}
and let $V_0(z)$ be a function with integral representation \eqref{hernev} with the measure $\mu$, i.e.,
\begin{equation*}\label{e-76-V0}
V_0(z)=\int_\bbR \left
(\frac{1}{\lambda-z}-\frac{\lambda}{1+\lambda^2}\right )
d\mu.
\end{equation*}
Then by direct calculations one immediately finds that $V_0(i)=i$ and that $V_0(z_1)-V_0(z_2)=0$ for any $z_1\ne z_2$ in $\dC_+$. Therefore, $V_0(z)\equiv i$ in $\dC_+$ and hence using \eqref{e-73-id-const}   we obtain \eqref{e-imp-m} or
 \begin{equation}\label{e-73-id-const-1}
V_{\Theta_\kappa}(z)=i\frac{1-\kappa}{1+\kappa}=\frac{1-\kappa}{1+\kappa}\,V_0(z),\quad z\in\dC_+.
\end{equation}
Let us construct a model triple $(\dot \cB,   \whB ,\cB)$ defined by \eqref{nacha1}--\eqref{nacha3} in the Hilbert space $L^2(\bbR;d\mu)$ using the measure $\mu$ from \eqref{e-75-mu} and our value of $\kappa$. Using the formula for the deficiency elements $g_z(\lambda)$ of $\dot B$ (see Proposition \ref{t-11}) and the definition of $s(\dot B, \cB)(z)$ in \eqref{charf12} we evaluate that $s(\dot B, \cB)(z)\equiv0$ in $\dC_+$. Then, \eqref{chchch} yields $S(\dot \cB,   \whB ,\cB)(z)\equiv \kappa$ in $\dC_+$. Moreover, applying Proposition \ref{t-11} to the operator $\whB$ in our triple we obtain
\begin{equation}\label{e-77-resolv}
(\whB - zI )^{-1}=(\cB- zI )^{-1}+i\left(\frac{\kappa-1}{2\kappa}\right)(\cdot\, ,g_{\overline{z}})g_z.
\end{equation}
Let us now follow Step 2 of the proof of Theorem \ref{t-25} to construct a model L-system $\Theta'$ of the form \eqref{e-59} corresponding to our model triple $(\dot \cB,   \whB ,\cB)$. Note, that this L-system $\Theta'$ is minimal by construction, its main operator $\whB$ has regular points in $\dC_+$ due to \eqref{e-77-resolv}, and, according to \eqref{e-60}, $W_{\Theta'}(z)\equiv 1/\kappa$. But formulas \eqref{e6-3-6} yield that in the case under consideration  $W_{\Theta_\kappa}(z)\equiv 1/\kappa$. Therefore $W_{\Theta_\kappa}(z)=W_{\Theta'}(z)$ and we can (taking into account the properties of $\Theta'$ we mentioned) apply the Theorem on bi-unitary equivalence \cite[Theorem 6.6.10]{ABT} for L-systems $\Theta_\kappa$ and $\Theta'$. Thus we have successfully constructed an L-system $\Theta'$ that  is bi-unitarily equivalent to the L-system $\Theta_\kappa$ and satisfies the conditions of Hypothesis \ref{setup}.
\end{remark}
Using similar reasoning as above we introduce another one parametric family of L-systems
\begin{equation}\label{e-62-beta-kappa}
\Theta_\kappa(\beta)= \begin{pmatrix} \bA_\kappa(\beta)&K_\kappa(\beta)&\ 1\cr \calH_+ \subset \calH \subset
\calH_-& &\dC\cr \end{pmatrix},
\end{equation}
which is different from the family in \eqref{e-62-beta} by the fact that all the members of the family have the same operator $T$ with the fixed von Neumann parameter $\kappa\ne0$. It easily follows from Theorem \ref{t-22} that for all $\beta\in[0,\pi)$ there is  only one non-constant in $\dC_+$ impedance function $V_{\Theta_\kappa(\beta)}(z)$ that belongs to the class $\sM_\kappa$. This happens when $\beta=0$ and consequently the L-system $\Theta_\kappa(0)$ complies with the conditions of Hypothesis \ref{setup}. The results of Theorems \ref{t-21}  and \ref{t-22} can be illustrated with the help of Figure \ref{fig-1} describing the parametric region for the family of L-systems $\Theta(\beta)$.  When $\kappa=0$ and $\beta$ changes from $0$ to $\pi$,  every point on the unit circle with cylindrical coordinates $(1,\beta,0)$, $\beta\in[0,\pi)$ describes an L-system $\Theta_0(\beta)$ and  Theorem  \ref{t-21} guarantees that $V_{\Theta_0(\beta)}(z)$ belongs to the class $\sM$. On the other hand, for any $\kappa_0$ such that $0<\kappa_0<1$ we apply Theorem  \ref{t-22} to conclude that only the point $(1,0,\kappa_0)$ on the wall of the cylinder is responsible for an L-system  $\Theta_{\kappa_0}(0)$ such that $V_{\Theta_{\kappa_0}(0)}(z)$ belongs to the class $\sM_{\kappa_0}$.
\begin{figure}
  \begin{center}
  \includegraphics[width=70mm]{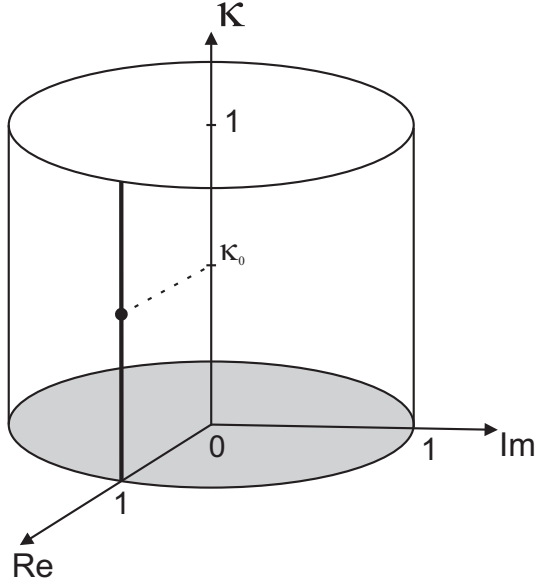}
  \caption{Parametric region $0\le\kappa<1$, $0\le\beta<\pi$}\label{fig-1}
  \end{center}
\end{figure}

\begin{theorem}\label{t-14}
Let  $V(z)$ belong to the generalized Donoghue class $\sM_\kappa$, $0\le\kappa<1$. Then $V(z)$ can be realized as the impedance function $V_{\Theta_\kappa}(z)$ of an L-system $\Theta_\kappa$
 of the form \eqref{e-62} with  the triple $(\dot A, T, \hat A)$ that satisfies Hypothesis \ref{setup} with $A=\hat A$, the quasi-kernel of $\RE\bA$. Moreover,
 \begin{equation}\label{e-73-new}
    V(z)=V_{\Theta_\kappa}(z)= \frac{1-\kappa}{1+\kappa}\,M(\dA, \hat A)(z),\quad z\in\dC_+,
 \end{equation}
 where $M(\dA, \hat A)(z)$ is the Weyl-Titchmarsh function associated with the pair $(\dA, \hat A)$.
\end{theorem}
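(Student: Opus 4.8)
The plan is to reduce the realization problem to the model-triple construction already carried out in Step~2 of the proof of Theorem~\ref{t-25}. First I would renormalize. Given $V\in\sM_\kappa$ with representing measure $\mu$ from \eqref{hernev}, set
$$
M_0(z)=\frac{1+\kappa}{1-\kappa}\,V(z),\quad z\in\dC_+.
$$
The defining property \eqref{e-61-kappa} shows that $M_0(i)=i$, and the rescaled measure $\nu=\frac{1+\kappa}{1-\kappa}\,\mu$ satisfies the Donoghue normalization $\int_\bbR\frac{d\nu(\lambda)}{1+\lambda^2}=1$; hence $M_0$ belongs to the Donoghue class $\sM$.

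Next I would feed the measure $\nu$ into the model construction of Section~\ref{s4}: form $L^2(\bbR;d\nu)$, the multiplication operator $\cB$, its symmetric restriction $\dot\cB$, and the dissipative restriction $\whB$ with the given von Neumann parameter $\kappa$, exactly as in \eqref{nacha1}--\eqref{nacha3}. By the model construction the Weyl-Titchmarsh function of the pair $(\dot\cB,\cB)$ equals $M_0$ (this is the realization half of the characterization of $\sM$; see \cite{D}, \cite{GMT}, \cite{GT}, \cite{MT-S}). I would then follow Step~2 of the proof of Theorem~\ref{t-25} verbatim to produce the $(*)$-extension $\dB$ with $\RE\dB\supset\cB$ and the associated model L-system $\Theta_\kappa$ of the form \eqref{e-59}.

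The payoff is then immediate from computations already recorded. Combining \eqref{e-55} and \eqref{e-vtheta} gives
$$
V_{\Theta_\kappa}(z)=\frac{1-\kappa}{1+\kappa}\,M_0(z)=\frac{1-\kappa}{1+\kappa}\cdot\frac{1+\kappa}{1-\kappa}\,V(z)=V(z),\quad z\in\dC_+,
$$
so $V$ is realized as the impedance function of $\Theta_\kappa$. By construction the model triple $(\dot\cB,\whB,\cB)$ satisfies Hypothesis~\ref{setup} with reference operator $A=\cB$, which is the quasi-kernel of $\RE\dB$; identifying $(\dA,T,\hat A)$ with $(\dot\cB,\whB,\cB)$ then gives $M(\dA,\hat A)=M_0$ and therefore the representation \eqref{e-73-new}.

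I expect the only genuine obstacle to be bookkeeping rather than analysis. One must verify that the model triple built from $\nu$ really does satisfy Hypothesis~\ref{setup}, i.e.\ that normalized deficiency elements $g_\pm$ can be chosen with $g_+-g_-\in\dom(\cB)$ and $g_+-\kappa g_-\in\dom(\whB)$, and that the regular-point and range requirements guaranteeing a well-defined transfer and impedance function hold. These facts are built into \eqref{nacha1}--\eqref{nacha3} and Proposition~\ref{t-11}, so the argument is essentially an assembly of the earlier pieces; the one point requiring care is to track the factor $\frac{1-\kappa}{1+\kappa}$ consistently, so that the measure feeding the model lands in $\sM$ and not in $\sM_\kappa$.
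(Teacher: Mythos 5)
Your proposal is correct and follows essentially the same route as the paper's proof: rescale $V$ by $\frac{1+\kappa}{1-\kappa}$ so the result (with measure $\frac{1+\kappa}{1-\kappa}\mu$) lands in $\sM$, feed that measure into the functional model of Section~\ref{s4}, and then invoke Step~2 of the proof of Theorem~\ref{t-25} together with \eqref{e-55} and \eqref{e-vtheta} to obtain the L-system $\Theta_\kappa$ and the identity \eqref{e-73-new}. The only cosmetic difference is that the paper explicitly passes through the $\kappa=0$ system $\Theta_0$ of Step~1 to identify $cV$ with $M(\dot\cB,\cB)=V_{\Theta_0}$, whereas you absorb that identification into the citation of \eqref{e-55}; the substance is the same.
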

\begin{proof}
Since $V(z)\in\sM_\kappa$, then it admits the integral representation \eqref{hernev} with normalization condition \eqref{e-61-kappa} on the measure $\mu$. Set
$$
c=\frac{1+\kappa}{1-\kappa}.
$$
It follows directly from definitions of classes $\sM$ and $\sM_\kappa$ that the function $cV\in\sM$ and thus has the integral representation \eqref{hernev} with the measure $\mu_0=c\mu$ and normalization condition \eqref{e-32-norm-m} on the measure $\mu_0$.
We use the measure $\mu_0$  to construct a model triple $(\dot \cB,   \whBo ,\cB)$ described by \eqref{nacha1}-\eqref{nacha3} with  $S(i)=0$. Note that the model triple $(\dot \cB,   \whBo ,\cB)$ satisfies Hypothesis \ref{setup}. Then we follow Step 1 of the proof of Theorem \ref{t-25} to build an L-system $\Theta_0$ given by \eqref{e-50}. According to \eqref{e-55}
$V_{\Theta_0}(z)=M(\dot\cB, \cB)(z)$. On the other hand, since $M(\dot\cB, \cB)(z)$ is the Weyl-Titchmarsh function associated with the pair $(\dot\cB, \cB)$, then it also admits a representation
$$
M(\dot\cB, \cB)(z)=\int_\bbR \left
(\frac{1}{\lambda-z}-\frac{\lambda}{1+\lambda^2}\right )
d\mu_0,\quad z\in\dC_+,
$$
with the same measure $\mu_0$ as in the representation for $cV$. Therefore,
$$
cV(z)=M(\dot\cB, \cB)(z)=V_{\Theta_0}(z),\quad z\in\dC_+,
$$
or $V(z)=(1/c)V_{\Theta_0}(z)$. Then we proceed with Step 2 of the proof of Theorem \ref{t-25} to construct an L-system $\Theta'$ given by \eqref{e-59}.
It is  shown in \eqref{e-vtheta} that
\begin{equation}\label{e-74-now}
V_{\Theta'}(z)= \frac{1-\kappa}{1+\kappa}\,M(\dot\cB, \cB)(z),\quad z\in\dC_+,
\end{equation}
and hence
$$
V_{\Theta'}(z)= \frac{1-\kappa}{1+\kappa}\,M(\dot\cB, \cB)(z)=\frac{1-\kappa}{1+\kappa}\,cV(z)=V(z).
$$
Therefore, we have constructed an L-system $\Theta_\kappa=\Theta'$ such that $V(z)=V_{\Theta_\kappa}(z)$.
The remaining part of \eqref{e-73-new} follows from \eqref{e-74-now}.
\end{proof}


\section{Examples}

\subsection*{Example 1}\label{ex-1}
Following \cite{AG93} we consider the prime symmetric operator
\begin{equation}\label{e-56}
    \begin{aligned}
\dA x&=i\frac{dx}{dt},\;
\dom(\dA)=\left\{x(t)\,\Big|\,x(t) -\text{abs. cont.}, x'(t)\in L^2_{[0,\ell]},\, x(0)=x(\ell)=0\right\}.\\
    \end{aligned}
\end{equation}
Its  (normalized) deficiency vectors of $\dA$ are
\begin{equation}\label{e-57}
g_+=\frac{\sqrt2}{\sqrt{e^{2\ell}-1}}e^t\in \sN_i,\qquad g_-=\frac{\sqrt2}{\sqrt{1-e^{-2\ell}}}e^{-t}\in \sN_{-i}.
\end{equation}
If we set $C=\frac{\sqrt2}{\sqrt{e^{2\ell}-1}}$, then \eqref{e-57} can be re-written as
$$
g_+=Ce^t,\quad g_-=Ce^\ell e^{-t}.
$$
Let
\begin{equation}\label{e-58'}
    \begin{aligned}
A x&=i\frac{dx}{dt},\;
\dom(A)=\left\{x(t)\,\Big|\,x(t) -\text{abs. cont.}, x'(t)\in L^2_{[0,\ell]},\, x(0)=-x(\ell)\right\}.\\
    \end{aligned}
\end{equation}
be a self-adjoint extension of $\dA$. Clearly, $g_+(0)-g_-(0)=C-Ce^\ell$ and $g_+(\ell)-g_-(\ell)=Ce^\ell-C$ and hence \eqref{star} is satisfied, i.e., $g_+-g_-\in\dom(A)$.

 Then the Liv\v{s}ic characteristic function $s(z)$ for the pair $(\dA,A)$ ha stye form  (see \cite{AG93})
\begin{equation}\label{e1-1}
s(z)=\frac{e^\ell-e^{-i\ell z}}{1-e^\ell e^{-i\ell z}}.
\end{equation}
We introduce the operator
\begin{equation}\label{e1-3}
    \begin{aligned}
T x&=i\frac{dx}{dt},\;
\dom(T)=\left\{x(t)\,\Big|\,x(t)-\text{abs. cont.}, x'(t)\in L^2_{[0,\ell]},\,  x(0)=0\right\}.
    \end{aligned}
\end{equation}
By construction, $T$ is a dissipative extension of $\dA$ parameterized by a von Neumann parameter $\kappa$. To find $\kappa$ we use \eqref{e-57} with \eqref{parpar} to obtain
\begin{equation}\label{e1-4}
x(t)=Ce^{t}-\kappa\, Ce^\ell e^{-t}\in\dom(T),\quad x(0)=0,
\end{equation}
yielding
\begin{equation}\label{e-62-1}
\kappa=e^{-\ell}.
\end{equation}
Obviously, the triple of operators $( \dot A, T, A)$ satisfy the conditions of Hypothesis \ref{setup} since $|\kappa|=e^{-\ell}<1$.
Therefore, we can use \eqref{ch1} to write out the characteristic function $S(z)$ for the triple $(\dA, T, A)$
\begin{equation}\label{e1-2}
S(z)=\frac{s(z)-\kappa}{\bar \kappa s(z)-1}=\frac{e^\ell-\kappa+e^{-i\ell z}(\kappa e^\ell-1)}{\bar\kappa e^\ell-1+e^{-i\ell z}(e^\ell-\bar\kappa)},
\end{equation}
and apply the value of $\kappa=e^{-\ell}$ to get
\begin{equation}\label{e-64}
S(z)=e^{i\ell z}.
\end{equation}

Now we shall use the triple $(\dA, T, A)$ for an L-system $\Theta$ that we about to construct. First, we note that by the direct check one gets
\begin{equation}\label{e1-3star}
    \begin{aligned}
T^* x&=i\frac{dx}{dt},\;
\dom(T)=\left\{x(t)\,\Big|\,x(t)-\text{abs. cont.}, x'(t)\in L^2_{[0,\ell]},\,  x(\ell)=0\right\}.
    \end{aligned}
\end{equation}
Following the steps of Example 7.6 of \cite{ABT} we have
\begin{equation}\label{e7-83}
\begin{aligned}
\dA^\ast x&=i\frac{dx}{dt},\;
\dom(\dA^\ast)&=\left\{x(t)\,\Big|\,x(t)-\text{ abs. cont.}, x'(t)\in L^2_{[0,\ell]}\right\}.\\
\end{aligned}
\end{equation}
Then $\calH_+=\dom(\dA^\ast)=W^1_2$ is the Sobolev space with scalar
product
\begin{equation}\label{e-product}
(x,y)_+=\int_0^\ell x(t)\overline{y(t)}\,dt+\int_0^\ell
x'(t)\overline{y'(t)}\,dt.
\end{equation}
 Construct rigged Hilbert space
$W^1_2\subset L^2_{[0,\ell]}\subset (W_2^1)_-$
and consider operators
\begin{equation}\label{e7-84}
\bA x=i\frac{dx}{dt}+i x(0)\left[\delta(t)-\delta(t-\ell)\right],\quad
\bA^\ast x=i\frac{dx}{dt}+i x(l)\left[\delta(t)-\delta(t-\ell)\right],
\end{equation}
where $x(t)\in W_2^1$, $\delta(t)$, $\delta(t-\ell)$ are delta-functions and elements of 
$(W^1_2)_-$ that generate functionals by the formulas
$(x,\delta(t))=x(0)$ and $(x,\delta(t-\ell))=x(\ell)$. It is easy to see
that
$\bA\supset T\supset \dA$, $\bA^\ast\supset T^\ast\supset \dA,$
and that
$$
\RE\bA x=i\frac{dx}{dt}+\frac{i }{2}(x(0)+x(\ell))\left[\delta(t)-\delta(t-\ell)\right].
$$
Clearly, $\RE\bA$ has its quasi-kernel equal to $A$ in \eqref{e-58'}. Moreover,
$$
\IM\bA x=\left(\cdot,\frac{1}{\sqrt 2}[\delta(t)-\delta(t-\ell)]\right) \frac{1}{\sqrt 2}[\delta(t)-\delta(t-\ell)]=(\cdot,\chi)\chi,
$$
where $\chi=\frac{1}{\sqrt 2}[\delta(t)-\delta(t-\ell)]$.
Now we can build
\begin{equation}\label{e6-125}
\Theta=
\begin{pmatrix}
\bA &K &1\\
&&\\
W_2^1\subset L^2_{[0,\ell]}\subset (W^1_2)_- &{ } &\dC
\end{pmatrix},
\end{equation}
that is a minimal L-system with
\begin{equation}\label{e7-62}
\begin{aligned}
Kc&=c\cdot \chi=c\cdot \frac{1}{\sqrt 2}[\delta(t)-\delta(t-l)], \quad (c\in \dC),\\
K^\ast x&=(x,\chi)=\left(x,  \frac{1}{\sqrt
2}[\delta(t)-\delta(t-l)]\right)=\frac{1}{\sqrt
2}[x(0)-x(l)],\\
\end{aligned}
\end{equation}
and $x(t)\in W^1_2$. In order to find the transfer function of $\Theta$ we begin by evaluating the resolvent of operator $T$ in \eqref{e1-3}. Solving the linear differential equation
of the first order with the initial condition from \eqref{e1-3} yields
\begin{equation}\label{e-resolv-72}
   R_z(T)f= (T-zI)^{-1}f=-i e^{-izt}\int_0^t f(s) e^{izs}\,ds,\quad f\in L^2_{[0,\ell]}.
\end{equation}
Similarly, one finds that
\begin{equation}\label{e-resolv-73}
   R_z(T^*)f= (T^*-zI)^{-1}f=i e^{-izt} \int_t^\ell f(s) e^{izs}\,ds,\quad f\in L^2_{[0,\ell]}.
\end{equation}
We need to extend $R_z(T)$ to $(W^1_2)_-$ to apply it to the vector $g$. We can accomplish this via finding the values of $\hat R_z(T) \delta(t)$ and $\hat R_z(T) \delta(t-l)$ (here $\hat R_z(T)$
is the extended resolvent). We have
$$
\begin{aligned}
(\hat R_z(T)\delta(t),f)&=(\delta(t),R_{\bar z}(T^*)f)=\ol{R_{\bar z}(T^*)f\Big|_{t=0}}
={-i}\int_0^\ell e^{-izs}\ol{f(s)}ds\\
&=(-ie^{-izt},f),\quad f\in L^2_{[0,\ell]},
\end{aligned}
$$
and hence $\hat R_z(T)\delta(t)=-ie^{-izt}$. Similarly, we determine that $\hat R_z(T)\delta(t-l)=0$. Consequently,
$$
\hat R_z(T) g=-\frac{i}{\sqrt 2}e^{-izt}.
$$
Therefore,
\begin{equation}\label{e-87W}
    \begin{aligned}
W_\Theta(z)&=1-2i((T-zI)^{-1}\chi,\chi)=1-2i\left(-\frac{i}{\sqrt 2}e^{-izt},\frac{1}{\sqrt 2}[\delta(t)-\delta(t-\ell)]\right)\\
&=1-(e^{-izt},\delta(t)-\delta(t-\ell))=1-1+e^{-i\ell z}=e^{-i\ell z}.
    \end{aligned}
\end{equation}
This confirms the result of Theorem \ref{t-25} and formula \eqref{e-54} by showing that $W_\Theta(z)=1/S(z)$. The corresponding impedance function is found via \eqref{e6-3-6} and is
$$
V_\Theta(z)=i\frac{e^{-i\ell z}-1}{e^{-i\ell z}+1}.
$$
Direct substitution yields
$$V_{\Theta}(i)=i\frac{e^{\ell}-1}{e^{\ell}+1}=i\frac{1-e^{-\ell}}{1+e^{-\ell}}=i\frac{1-\kappa}{1+\kappa},$$
and thus $V_\Theta(z)\in\sM_\kappa$ with $\kappa=e^{-\ell}$.

\subsection*{Example 2}\label{ex-2}
In this Example  we will rely on the main elements of the construction presented in Example 1 but with some changes. Let $\dA$ and $A$ be still defined by formulas \eqref{e-56} and \eqref{e-58'}, respectively and let $s(z)$ be  the Liv\v{s}ic characteristic function $s(z)$ for the pair $(\dA,A)$ given by \eqref{e1-1}.
We introduce the operator
\begin{equation}\label{e1-3'}
T_0 x=i\frac{dx}{dt},\;
\end{equation}
$$
\dom(T_0)=\left\{x(t)\,\Big|\,x(t)-\text{abs. cont.}, x'(t)\in L^2_{[0,\ell]},\,  x(\ell)=e^{\ell}x(0)\right\}.
$$
It turns out that  $T_0$ is a dissipative extension of $\dA$ parameterized by a von Neumann parameter $\kappa=0$. Indeed, using \eqref{e-57} with \eqref{parpar} again we obtain
\begin{equation}\label{e-70}
x(t)=Ce^{t}-\kappa\, Ce^\ell e^{-t}\in\dom(T),\quad x(\ell)=e^{\ell}x(0),
\end{equation}
yielding $\kappa=0$. Clearly, the triple of operators $( \dot A,T_0, A)$ satisfy the conditions of Hypothesis \ref{setup} but this time, since $\kappa=0$, we have that $S(z)=-s(z)$.

Following the steps of Example 1 we are going to use the triple $(\dA, T_0, A)$ in the construction of an L-system $\Theta_0$. By the direct check one gets
\begin{equation}\label{e1-3star'}
T_0^* x=i\frac{dx}{dt},
\end{equation}
$$
\dom(T)=\left\{x(t)\,\Big|\,x(t)-\text{abs. cont.}, x'(t)\in L^2_{[0,\ell]},\,  x(\ell)=e^{-\ell}x(0)\right\}.
$$
Once again, we have $\dA^*$ defined by \eqref{e7-83}
and  $\calH_+=\dom(\dA^\ast)=W^1_2$ is a  space with scalar
product \eqref{e-product}.
 Consider the operators
\begin{equation}\label{e-73}
\begin{aligned}
\bA_0 x&=i\frac{dx}{dt}+i \frac{x(\ell)-e^\ell x(0)}{e^\ell-1} \left[\delta(t-\ell)-\delta(t)\right],\\
\bA_0^\ast &x=i\frac{dx}{dt}+i \frac{x(0)-e^{\ell} x(\ell)}{e^\ell-1} \left[\delta(t-\ell)-\delta(t)\right],
\end{aligned}
\end{equation}
where $x(t)\in W_2^1$. It is easy to see
that
$\bA\supset T_0\supset \dA$, $\bA^\ast\supset T_0^\ast\supset \dA,$
and
$$
\RE\bA_0 x=i\frac{dx}{dt}-\frac{i }{2}(x(0)+x(\ell))\left[\delta(t-\ell)-\delta(t)\right].
$$
Thus $\RE\bA_0$ has its quasi-kernel equal to $A$ in \eqref{e-58'}. Similarly,
$$
\IM\bA_0 x=\left(\frac{1}{2}\right) \frac{e^\ell+1}{e^\ell-1} (x(\ell)-x(0))\left[\delta(t-\ell)-\delta(t)\right].
$$
Therefore,
$$
\begin{aligned}
\IM\bA_0&=\left(\cdot,\sqrt{\frac{e^\ell+1}{2(e^\ell-1)}}\,\left[\delta(t-\ell)-\delta(t)\right]\right)\sqrt{\frac{e^\ell+1}{2(e^\ell-1)}}\,\left[\delta(t-\ell)-\delta(t)\right]\\
&=(\cdot,\chi_0)\chi_0,
\end{aligned}
$$
where $\chi_0=\sqrt{\frac{e^\ell+1}{2(e^\ell-1)}}\,[\delta(t-\ell)-\delta(t)]$.
Now we can build
\begin{equation*}
\Theta_0= 
\begin{pmatrix}
\bA_0&K_0 &1\\
&&\\
W_2^1\subset L^2_{[0,l]}\subset (W^1_2)_- &{ } &\dC
\end{pmatrix},
\end{equation*}
which is a minimal  L-system with
$K_0 c=c\cdot \chi_0$, $(c\in \dC)$, $K_0^\ast x=(x,\chi_0)$ and $x(t)\in W^1_2$. Following Example 1 we derive
\begin{equation}\label{e-resolv-78}
    \begin{aligned}
   R_z(T_0)&=(T_0-zI)^{-1}f\\
   &=-i e^{-izt}\left(\int_0^t f(s) e^{izs}\,ds+\frac{e^{-i\ell z}}{e^\ell-e^{-i\ell z}}\int_0^l f(s) e^{izs}\,ds\right),
    \end{aligned}
\end{equation}
and
\begin{equation}\label{e-resolv-79}
    \begin{aligned}
  R_z(T_0^*)&= (T_0^*-zI)^{-1}f\\
  &=-i e^{-izt}\left(\int_0^t f(s) e^{izs}\,ds+\frac{e^{-i\ell z}}{e^{-\ell}-e^{-i\ell z}}\int_0^l f(s) e^{izs}\,ds\right),
  \end{aligned}
\end{equation}
for $f\in L^2_{[0,\ell]}$. Then again
$$
\begin{aligned}
(\hat R_z(T_0)\delta(t),f)&=(\delta(t),R_{\bar z}(T_0^*)f)=\ol{R_{\bar z}(T_0^*)f\Big|_{t=0}}
=\frac{i e^{i\ell z}}{e^{-\ell}-e^{i\ell z}}\int_0^\ell e^{-izs}\ol{f(s)}ds\\
&=\frac{i e^{\ell}}{e^{-i\ell z}-e^{\ell}}(e^{-izt},f),\quad f\in L^2_{[0,\ell]}.
\end{aligned}
$$
Similarly,
$$
\begin{aligned}
(\hat R_z(T_0)&\delta(t-\ell),f)=(\delta(t-\ell),R_{\bar z}(T_0^*)f)=\ol{R_{\bar z}(T_0^*)f\Big|_{t=\ell}}
\\&=\frac{i e^{i\ell z}e^{-\ell}}{e^{-\ell}-e^{i\ell z}}\int_0^\ell e^{-izs}\ol{f(s)}ds
=\frac{i}{e^{-i\ell z}-e^{\ell}}(e^{-izt},f),\quad f\in L^2_{[0,\ell]}.
\end{aligned}
$$
Hence,
\begin{equation}\label{e-80}
   \hat R_z(T_0)\delta(t)= \frac{i e^{\ell}}{e^{-i\ell z}-e^{\ell}}\,e^{-izt},\quad
   \hat R_z(T_0)\delta(t-\ell)=\frac{i}{e^{-i\ell z}-e^{\ell}}\,e^{-izt},
\end{equation}
and
$$
\hat R_z(T_0) \chi_0=\hat R_z(T_0)\sqrt{\frac{e^\ell+1}{2(e^\ell-1)}}\left[\delta(t-\ell)-\delta(t)\right]=\sqrt{\frac{e^\ell+1}{2(e^\ell-1)}}\frac{i-i e^{\ell}}{e^{-i\ell z}-e^{\ell}}\,e^{-izt}.
$$
Using techniques of Example 1 one finds the
transfer function of $\Theta_0$ to be
$$
\begin{aligned}
W_{\Theta_0}(z)&=1-2i(\hat R_z(T_0)\chi_0,\chi_0)\\
&=1-2i\left(\sqrt{\frac{e^\ell+1}{2(e^\ell-1)}}\frac{i-i e^{\ell}}{e^{-i\ell z}-e^{\ell}}\,e^{-izt},\sqrt{\frac{e^\ell+1}{2(e^\ell-1)}}[\delta(t-\ell)-\delta(t)]\right)\\
&=1+\frac{e^\ell+1}{e^\ell-1}\left(\frac{e^{\ell}-1}{e^{-i\ell z}-e^{\ell}}\,e^{-izt},\delta(t-\ell)-\delta(t)\right)\\
&=1-\frac{e^\ell+1}{e^\ell-1}\left(\frac{e^{\ell}-1}{e^{-i\ell z}-e^{\ell}}-\frac{(e^{\ell}-1)e^{-iz\ell}}{e^{-i\ell z}-e^{\ell}}\right)\\
&=1+(e^{\ell}+1)\left(\frac{1-e^{-iz\ell}}{e^{-i\ell z}-e^\ell}\right)
\\&=\frac{e^\ell e^{-i\ell z}-1}{e^\ell-e^{-i\ell z}}.
\end{aligned}
$$
This confirms the result of Corollary \ref{c-26} and formula \eqref{e-54} by showing that $W_{\Theta_0}(z)=-1/s(z)$.
The corresponding impedance function is
$$
V_{\Theta_0}(z)=i\frac{e^\ell+1}{e^{\ell}-1}\cdot\frac{e^{-i\ell z}-1}{e^{-i\ell z}+1}.
$$
A quick inspection confirms that $V_{\Theta_0}(i)=i$ and hence $V_{\Theta_0}(z)\in\sM$.

\subsection*{Remark}

We can use  Examples 1 and 2 to illustrate Lemma \ref{l-20} and Theorem \ref{t-22}.
As one can easily tell that the impedance function $V_{\Theta_0}(z)$ from Example 2 above and the impedance function $V_{\Theta}(z)$ from Example 1 are related via \eqref{e-imp-m} with $\kappa=e^{-\ell}$, that is,
$$
V_{\Theta}(z)=\frac{1-e^{-\ell}}{1+e^{-\ell}}\,V_{\Theta_0}(z).
$$

Let $\Theta$ be the L-system of the form \eqref{e6-125} described in Example 1 with the transfer function $W_\Theta(z)$ given by \eqref{e-87W}. It was shown in \cite[Theorem 8.3.1]{ABT} that if one takes a function $W(z)=-W_\Theta(z)$, then $W(z)$ can be realized as a transfer function of another L-system $\Theta_1$ that shares the same main operator $T$ with $\Theta$ and in this case
$$
V_{\Theta_1}(z)=-1/V_{\Theta}(z)=i\frac{e^{-i\ell z}+1}{e^{-i\ell z}-1}.
$$
Clearly, $V_{\Theta_1}(z)$ and $V_{\Theta_0}(z)$ are not related via \eqref{e-imp-m} even though $\Theta_1$ has the same operator $T$ with the same parameter $\kappa=e^{-\ell}$ as in $\Theta$. The reason for that is the fact that the quasi-kernel of the real part of $\bA_1$ of the L-system $\Theta_1$ does not satisfy the conditions of Hypothesis \ref{setup} as indicated by Theorem \ref{t-22}.
\subsection*{Example 3}\label{ex-3}
In this Example we are going to extend the construction of Example 2 to obtain a family of L-systems $\Theta_0(\beta)$ described in \eqref{e-62-beta}. Let $\dA$  be  defined by formula \eqref{e-56} but the operator $A$ be an arbitrary self-adjoint extension of $\dA$. It is known then \cite{AG93} that all such operators $A$ are described with the help of a unimodular parameter $\mu$ as follows
\begin{equation}\label{e-58-mu}
    \begin{aligned}
A x&=i\frac{dx}{dt},\\
\dom(A)&=\left\{x(t)\,\Big|\,x(t) \in \dom(\dA^*),\, \mu x(\ell)+x(0)=0,\,|\mu|=1\right\}.\\
    \end{aligned}
\end{equation}
In order to establish the connection between the boundary value $\mu$ in \eqref{e-58-mu} and the von Neumann parameter $U$ in \eqref{DOMHAT} we follow the steps similar to Example 1 to guarantee that $g_++ U g_-\in\dom(A)$, where $g_\pm$ are given by \eqref{e-57}. Quick set of calculations yields
\begin{equation}\label{e-99-mu}
    U=-\frac{1+\mu e^\ell}{\mu+e^\ell}.
\end{equation}
For this value of $U$ we set the value of $\beta$ so that $U=e^{2i\beta}$, where $\beta\in[0,\pi)$ and thus establish the link between the parameters $\mu$ and $\beta$ that will be used to construct the family $\Theta_0(\beta)$. In particular, we note that $\beta=0$ if and only if $\mu=-1$.

Once again, having $\dA^*$ defined by \eqref{e7-83} and  $\calH_+=\dom(\dA^\ast)=W^1_2$  a  space with scalar
product \eqref{e-product}, consider the following operators
\begin{equation}\label{e-73-mu}
\begin{aligned}
\bA_0(\beta) x&=i\frac{dx}{dt}+i \frac{\bar\mu}{\bar\mu+e^{-\ell}}(x(0)-e^{-\ell}x(\ell)) \left[\mu\delta(t-\ell)+\delta(t)\right],\\
\bA_0^\ast(\beta) &x=i\frac{dx}{dt}+i \frac{1}{\mu+e^{-\ell}}(e^{-\ell}x(0)-x(\ell)) \left[\mu\delta(t-\ell)+\delta(t)\right],
\end{aligned}
\end{equation}
where $x(t)\in W_2^1$. It is immediate that $\bA\supset T_0\supset \dA$, $\bA^\ast\supset T_0^\ast\supset \dA,$  where $T_0$ and $T_0^*$ are given by \eqref{e1-3'} and \eqref{e1-3star'}. Also, as one can easily see, when $\beta=0$ and consequently $\mu=-1$, the operators $\bA_0(0)$ and $\bA_0^\ast(0)$ in \eqref{e-73-mu} match the corresponding pair $\bA_0$ and $\bA_0^*$ in \eqref{e-73}. By performing direct calculations we obtain
$$
\RE\bA_0(\beta) x=i\frac{dx}{dt}+\frac{i }{2}(\nu x(\ell)+x(0))\left[\mu\delta(t-\ell)+\delta(t)\right],
$$
where
\begin{equation}\label{e-101-nu}
\nu=\frac{2\mu e^{-\ell}+e^{-2\ell}+1}{\mu+2e^{-\ell}+\mu e^{-2\ell}},
\end{equation}
and $|\nu|=1.$ Consequently, $\RE\bA_0$ has its quasi-kernel
\begin{equation}\label{e-102-qk}
    \hat A_0(\beta)=i\frac{dx}{dt},\quad
\dom(A)=\left\{x(t)\,\Big|\,x(t) \in \dom(\dA^*),\, \nu x(\ell)+x(0)=0\right\}.
\end{equation}
Moreover,
$$
\IM\bA_0(\beta) x=\left(\frac{1}{2}\right) \left(\frac{1-e^{-2\ell}}{|\mu+e^{-2\ell}|}\right) (\bar\mu x(\ell)+x(0))\left[\mu\delta(t-\ell)+\delta(t)\right].
$$
Therefore,
$$
\begin{aligned}
\IM\bA_0(\beta)&=\left(\cdot,\frac{\sqrt{1-e^{-2\ell}}}{\sqrt2 |\mu+e^{-2\ell}| }\,\left[\mu\delta(t-\ell)+\delta(t)\right]\right)\frac{\sqrt{1-e^{-2\ell}}}{\sqrt2 |\mu+e^{-2\ell}| }\,\left[\mu\delta(t-\ell)+\delta(t)\right]\\
&=(\cdot,\chi_0(\beta))\chi_0(\beta),
\end{aligned}
$$
where $\chi_0(\beta)=\sqrt{\frac{e^\ell+1}{2(e^\ell-1)}}\,[\delta(t-\ell)-\delta(t)]$.
Now we can compose our one-parametric L-system family
\begin{equation*}
\Theta_0(\beta)= 
\begin{pmatrix}
\bA_0(\beta)&K_0(\beta) &1\\
&&\\
W_2^1\subset L^2_{[0,l]}\subset (W^1_2)_- &{ } &\dC
\end{pmatrix},
\end{equation*}
where $K_0(\beta) c=c\cdot \chi_0(\beta)$, $(c\in \dC)$, $K_0^\ast(\beta) x=(x,\chi_0(\beta))$ and $x(t)\in W^1_2$. Using techniques of Example 2 one finds the
transfer function of $\Theta_0(\beta)$ to be
$$
\begin{aligned}
W_{\Theta_0(\beta)}(z)&=1-2i(\hat R_z(T_0)\chi_0(\beta),\chi_0(\beta))=\left(\frac{e^\ell+\mu}{\mu e^\ell+1} \right)\frac{e^\ell e^{-i\ell z}-1}{e^\ell-e^{-i\ell z}}.
\end{aligned}
$$
The corresponding impedance function is again found via \eqref{e6-3-6}
$$
V_{\Theta_0(\beta)}(z)=i\frac{(\bar\mu e^{-i\ell z}-1)(e^{2\ell}+1)+2e^\ell e^{-i\ell z}-2\bar\mu e^\ell }{(\bar\mu e^{-i\ell z}+1)(e^{2\ell}-1)}.
$$
A quick inspection confirms that $V_{\Theta_0(\beta)}(i)=i$ and hence $V_{\Theta_0(\beta)}(z)$ belongs to the Donoghue class $\sM$ for all $\beta\in[0,\pi)$ (equivalently $|\mu|=1$). Also, one can see that if $\beta=0$ and consequently $\mu=-1$ the conditions of Hypothesis \ref{setup} are satisfied and the L-system $\Theta_0(0)$ coincides with the L-system $\Theta_0$ of Example 2 and so do its  transfer and impedance functions.

\subsection*{Example 4}\label{ex-4}
In this Example  we will generalize the results obtained in Examples 1 and 2. Once again, let $\dA$ and $A$ be  defined by formulas \eqref{e-56} and \eqref{e-58'}, respectively and let $s(z)$ be  the Liv\v{s}ic characteristic function $s(z)$ for the pair $(\dA,A)$ given by \eqref{e1-1}.
We introduce a one-parametric family of operators
\begin{equation}\label{e3-33}
    \begin{aligned}
T_\rho x&=i\frac{dx}{dt},\;
\dom(T_\rho)=\left\{x(t)\,\Big|\,x(t)-\text{abs. cont.}, x'(t)\in L^2_{[0,\ell]},\,  x(\ell)=\rho x(0)\right\}.
    \end{aligned}
\end{equation}
We are going to select the values of boundary parameter $\rho$ in a way that will make $T_\rho$ compliant with Hypothesis \ref{setup}. By performing the direct check we conclude that $\IM (T_\rho f,f)\ge0$ for $f\in \dom(T_\rho)$ if $|\rho|>1$. This will guarantee that $T_\rho$ is a  dissipative extension of $\dA$ parameterized by a von Neumann parameter $\kappa$. For further convenience we assume that $\rho\in\dR$. To find the connection between $\kappa$ and $\rho$ we  use \eqref{e-57} with \eqref{parpar} again to obtain
\begin{equation}\label{e-70'}
x(t)=Ce^{t}-\kappa\, Ce^\ell e^{-t}\in\dom(T),\quad x(\ell)=\rho x(0).
\end{equation}
Solving \eqref{e-70'} in two ways  yields
\begin{equation}\label{e-kappa-rho}
\kappa=\frac{\rho-e^\ell}{\rho e^\ell-1}\quad\textrm{ and }\quad \rho=\frac{\kappa-e^\ell}{\kappa e^\ell-1}.
\end{equation}
Using the first of relations \eqref{e-kappa-rho} to find which values of $\rho$ provide us with $0\le\kappa<1$ we obtain
\begin{equation}\label{e-96-rho}
    \rho\in (-\infty,-1)\cup [e^\ell,+\infty).
\end{equation}
 Now assuming \eqref{e-96-rho} we can acknowledge that  the triplet of operators $( \dot A,T_\rho, A)$ satisfy the conditions of Hypothesis \ref{setup}. Following Examples 1 and 2, we are going to use the triplet $(\dA, T_\rho, A)$ in the construction of an L-system $\Theta_\rho$. By the direct check we have
\begin{equation}\label{e1-3star''}
T_\rho^* x=i\frac{dx}{dt},
\end{equation}
$$
\dom(T_\rho)=\left\{x(t)\,\Big|\,x(t)-\text{abs. cont.}, x'(t)\in L^2_{[0,\ell]},\,  \rho x(\ell)= x(0)\right\}.
$$
Once again, we have $\dA^*$ defined by \eqref{e7-83} and  $\calH_+=\dom(\dA^\ast)=W^1_2$ is a  space with scalar
product \eqref{e-product}.
 Consider the operators
\begin{equation}\label{e-73'}
\begin{aligned}
\bA_\rho x&=i\frac{dx}{dt}+i \frac{x(\ell)-\rho x(0)}{\rho-1} \left[\delta(t-\ell)-\delta(t)\right],\\
\bA_\rho^\ast &x=i\frac{dx}{dt}+i \frac{x(0)-\rho x(\ell)}{\rho-1} \left[\delta(t-\ell)-\delta(t)\right],
\end{aligned}
\end{equation}
where $x(t)\in W_2^1$. One easily checks that since $\IM\rho=0$, then $\bA_\rho^*$ is the adjoint to $\bA_\rho$ operator.  Evidently,
that
$\bA\supset T_\rho \supset \dA $, $\bA^\ast\supset T_\rho ^\ast\supset \dA ,$
and
$$
\RE\bA_\rho  x=i\frac{dx}{dt}-\frac{i }{2}(x(0)+x(\ell))\left[\delta(t-\ell)-\delta(t)\right].
$$
Thus $\RE\bA_\rho $ has its quasi-kernel equal to $A$ defined in \eqref{e-58'}. Similarly,
$$
\IM\bA_\rho  x=\left(\frac{1}{2}\right) \frac{\rho+1}{\rho-1} (x(\ell)-x(0))\left[\delta(t-\ell)-\delta(t)\right].
$$
Therefore,
$$
\begin{aligned}
\IM\bA_\rho &=\left(\cdot,\sqrt{\frac{\rho+1}{2(\rho-1)}}\,\left[\delta(t-\ell)-\delta(t)\right]\right)\sqrt{\frac{\rho+1}{2(\rho-1)}}\,\left[\delta(t-\ell)-\delta(t)\right]\\
&=(\cdot,\chi_\rho )\chi_\rho ,
\end{aligned}
$$
where $\chi_\rho =\sqrt{\frac{\rho+1}{2(\rho-1)}}\,[\delta(t-\ell)-\delta(t)]$.
Now we can build
\begin{equation*}
\Theta_\rho =
\begin{pmatrix}
\bA_\rho &K_\rho  &1\\
&&\\
W_2^1\subset L^2_{[0,l]}\subset (W^1_2)_- &{ } &\dC
\end{pmatrix},
\end{equation*}
which is a minimal  L-system with
$K_\rho  c=c\cdot \chi_\rho $, $(c\in \dC)$, $K_\rho ^\ast x=(x,\chi_\rho )$ and $x(t)\in W^1_2$. Evaluating the transfer function $W_{\Theta_\rho}(z)$ resembles the steps performed in Example 2.
We have
\begin{equation}\label{e-resolv-rho}
    \begin{aligned}
   R_z(T_\rho)&=(T_\rho-zI)^{-1}f\\
   &=-i e^{-izt}\left(\int_0^t f(s) e^{izs}\,ds+\frac{e^{-i\ell z}}{\rho-e^{-i\ell z}}\int_0^l f(s) e^{izs}\,ds\right).
    \end{aligned}
\end{equation}
This leads to
$$
\hat R_z(T_\rho) \chi_\rho=\hat R_z(T_\rho)\sqrt{\frac{\rho+1}{2(\rho-1)}}\left[\delta(t-\ell)-\delta(t)\right]=i \sqrt{\frac{\rho+1}{2(\rho-1)}}\left(\frac{1-\rho}{e^{-i\ell z}-\rho}\right)\,e^{-izt},
$$
and eventually to
$$
W_{\Theta_\rho }(z)=1-2i(\hat R_z(T_\rho )\chi_\rho ,\chi_\rho )=\frac{\rho e^{-i\ell z}-1}{\rho-e^{-i\ell z}}.
$$
Evaluating the impedance function $V_{\Theta_\rho }(z)$ results in
$$
V_{\Theta_\rho}(z)=i\frac{\rho+1}{\rho-1}\cdot\frac{1-e^{-i\ell z}}{1+e^{-i\ell z}}.
$$
Using direct calculations and \eqref{e-kappa-rho} gives us
$$
\frac{\rho+1}{\rho-1}=\frac{1-\kappa}{1+\kappa}\cdot \frac{e^\ell+1}{e^\ell-1},
$$
and thus
$$
V_{\Theta_\rho}(z)=\frac{1-\kappa}{1+\kappa}\,V_{\Theta_0}(z),
$$
which confirms the result of Lemma \ref{l-20}.

\appendix
\section{Rigged Hilbert spaces}\label{A1}

In this Appendix we are going to explain the construction and basic geometry of rigged Hilbert spaces.

We start with a Hilbert space $\mathcal H$ with inner product
$(x,y)$ and norm $\|\cdot\|$. Let  $\mathcal H_+$ be a dense in
$\calH$ linear set that is a Hilbert space itself with respect to
another inner product $(x,y)_+$ generating the norm $\|\cdot\|_+$.
We assume that $\|x\|\le\|x\|_+$, ($x\in\calH_+$), i.e., the norm
$\|\cdot\|_+$ generates a stronger than  $\|\cdot\|$ topology in
$\calH_+$. The space $\mathcal H_+$ is called the \textit{space with the
positive norm}.

Now let $\mathcal H_-$ be a space dual to $\mathcal H_+$. It means
that $\mathcal H_-$ is a space of linear functionals defined on
$\mathcal H_+$ and continuous with respect to $\|\cdot\|_+$. By the
$\|\cdot\|_-$ we denote the norm in $\mathcal H_-$ that has a form
\[
\|h\|_{-}=\sup\limits_{u\in\cH_+}\frac{|(h,u)|}{\|u\|_+},\; h\in\cH.
\]
 The value of a
functional $f\in\mathcal H_-$ on a vector $u\in\mathcal H_+$ is
denoted by $(u,f)$. The space $\mathcal H_-$ is called the
\textit{space with the negative norm}.

Consider an embedding operator $\sigma:\calH_+\mapsto\calH$ that embeds $\calH_+$ into
$\calH$. Since $\|\sigma f\|\le\|f\|_+$ for all $f\in\calH_+$, then
$\sigma\in[\calH_+,\calH]$. The adjoint operator $\sigma^*$ maps
$\calH$ into $\calH_-$ and satisfies the condition $\|\sigma^*
f\|_-\le\|f\|$ for all $f\in\calH$. Since $\sigma$ is a monomorphism
with a $(\cdot)$-dense range, then $\sigma^*$ is a  monomorphism
with $(-)$-dense range. By identifying $\sigma^* f$ with $f$
($f\in\calH$) we can consider $\calH$  embedded in $\calH_-$ as a
$(-)$-dense set and $\|f\|_-\le\|f\|$. Also, the relation
$$(\sigma f,h)=(f,\sigma^* h),\qquad f\in\calH_+,\, h\in\calH,$$
implies that the value of the functional $\sigma^* h\in\calH$
calculated at a vector $f\in\calH_+$ as $(f,\sigma^*h)$ corresponds
to the value $(f,h)$ in the space $\calH$.

It follows from the Riesz representation theorem that  there exists
an isometric  operator $\calR$ which maps $\mathcal H_-$ onto
$\mathcal H_+$ such
 that   $(f,g)=(f,\calR g)_+$ ($\forall f\in\calH_+$, $g\in\calH_-$) and
 $\|\calR g\|_+=\| g\|_-$. Now we can turn $\calH_-$ into a Hilbert
 space by introducing $(f,g)_-=(\calR f,\calR g)_+$. Thus,
\begin{equation}\label{e3-4}
\aligned (f,g)_-=(f,\calR g)=(\calR f,g)=(\calR f,\calR g)_+,\qquad
(f,g\in \mathcal H_-),\\
(u,v)_+=(u,\calR^{-1} v)=(\calR^{-1} u,v)=(\calR^{-1} u,\calR^{-1}
v)_-,\qquad (u,v\in \mathcal H_+).
\endaligned
\end{equation}
The operator $\calR$ (or $\calR^{-1}$) will be called the
\textit{Riesz-Berezansky operator}.  We note that $\calH_+$ is also dual to $\calH_-$. Applying the above reasoning, we
define a triplet $\mathcal H_+\subset \mathcal H\subset \mathcal
H_-$ to be called the \textit{rigged Hilbert space} \cite{Ber63}, \cite{Ber}.

Now we explain how to construct a rigged Hilbert space using a symmetric operator. Let  $\dA$ be a closed symmetric operator whose domain
$\dom(\dA)$ is not assumed to be dense in $\calH$. Setting
$\ol{\dom(\dA)}=\calH_0$, we can consider $\dA$ as a densely defined
operator from $\calH_0$ into $\calH$. Clearly, $\dom(\dA^*)$ is
dense in $\calH$ and $\ran(\dA^*)\subset\calH_0$.
 We introduce a new Hilbert space $\mathcal H_+=\dom(\dot
A^*)$  with inner product
\begin{equation}\label{e3-5}
(f,g)_+=(f,g)+(\dot A^*f,\dot A^*g),\qquad (f,g\in \mathcal H_+),
\end{equation}
and then construct the operator generated  rigged Hilbert space $\mathcal H_+\subset
\mathcal H\subset \mathcal H_-$.


\end{document}